\newtheorem{theorem}{Theorem}[section]
\newtheorem{lemma}[theorem]{Lemma}
\newtheorem{prop}[theorem]{Proposition}
\newtheorem{cor}[theorem]{Corollary}
\newtheorem{thm}[theorem]{Theorem}
\newtheorem{lem}[theorem]{Lemma}
\newtheorem{question}[theorem]{Question}
\newtheorem*{cor*}{Corollary}
\newtheorem*{conjecture*}{Conjecture}
\newtheorem*{thm*}{Theorem}
\newtheorem*{lem*}{Lemma}
\newtheorem*{prop*}{Proposition}
\theoremstyle{definition}
\newtheorem{definition}[theorem]{Definition}
\newtheorem{example}[theorem]{Example}
\newtheorem*{defn*}{Definition}
\theoremstyle{remark}
\newtheorem{remark}[theorem]{Remark}
\newcommand{\M}{\mathcal{M}}
\newcommand{\cM}{\mathcal{M}}
\newcommand{\bC}{{\mathbb{C}}}
\newcommand{\E}{{\mathbb{E}}}
\newcommand{\bZ}{{\mathbb{Z}}}
\newcommand{\C}{\mathbb C}
\newcommand{\G}{\Gamma}
\title[]{On invariant von Neumann subalgebras rigidity property}
\author{Tattwamasi Amrutam}
\address{T. Amrutam, Department of mathematics, Ben Gurion University of the Negev,
P.O.B. 653, Be'er Sheva, 8410501, Israel}
\email{tattwamasiamrutam@gmail.com}
\author{Yongle Jiang*}
\address{Y. Jiang, School of Mathematical Sciences, Dalian University of Technology, Dalian, 116024, China}
\email{yonglejiang@dlut.edu.cn}
\thanks{$\ast$-corresponding author}
\date{\today}
\begin{document}

\begin{abstract}
We say that a countable discrete group $\Gamma$ satisfies the invariant von Neumann subalgebras rigidity (ISR) property if every $\Gamma$- invariant von Neumann subalgebra $\mathcal{M}$ in $L(\Gamma)$ is of the form $L(\Lambda)$ for some normal subgroup $\Lambda\lhd \Gamma$. We show many ``negatively curved" groups, including all torsion free non-amenable hyperbolic groups and torsion free groups with positive first $L^2$-Betti number under a mild assumption, and certain finite direct product of them have this property. We also discuss whether the torsion-free assumption can be relaxed. 
\end{abstract}

\subjclass[2020]{Primary 46L10; Secondary 20F67, 47C15}

\keywords{invariant von Neumann subalgebras, hyperbolic groups, first $L^2$-Betti number}

\maketitle

\tableofcontents
\section{Introduction and statement of main results}\label{sec: introduction}
Let $\Gamma$ be a discrete group. A subgroup $\Lambda\le\Gamma$ leads to an inclusion $L(\Lambda)\le L(\Gamma)$ at the group von Neumann algebra level and at the bigger level of crossed products, to an inclusion of $\mathcal{M}\rtimes\Lambda\le \mathcal{M}\rtimes\Gamma$ for $\Gamma$-von Neumann algebra $\mathcal{M}$. Many deep results have appeared in regards to determining the structure of an intermediate algebra associated with such inclusions (see e.g.,~\cites{choda 1978,izumi1998galois,cameron2016intermediate,cameron2019galois} etc). All these results can be thought of as certain ``rigidity phenomenon" associated with algebras which are invariant under the conjugation action of a subgroup of the unitary group $\mathcal{U}(\mathcal{H})$. Seen from this view point, we are interested in subalgebras $\mathcal{M}$ associated with the inclusion $\mathbb{C}\subset L(\Gamma)$ which are invariant under the conjugation action of the group $\Gamma$.

On the other hand, several studies have been devoted to understanding the properties of the group which reflect at the von Neumann algebras level. For example, Boutonnet-Carderi~ \cites{BC2017, BC2015} initiated the general study of maximal amenable von Neumann algebras associated with maximal amenable subgroups, which was motivated by Popa's seminal result \cite{Popa}.
These works motivated the second named author along with Skalski \cite{js2021maximal} to study maximal Haagerup property, where they obtained several examples where maximal Haagerup subalgebras come from maximal Haagerup subgroups. It was further continued in \cites{jiang2020maximal,jiang2021maximal}, where only maximality condition is reserved. From our point of view, these results can be interpreted as finding a class of ``rigid" von Neumann algebras in the sense that these von Neummann algebras inherit the property of the groups.   

In this paper, we focus on another such ``rigidity" property. Namely, the property of invariance which also translates from the group to the group von Neumann algebra. To be more precise, a normal subgroup (which is an invariant subgroup under the conjugation action)
$\Lambda\triangleleft\Gamma$ leads to a $\Gamma$-invariant subalgebra $L(\Lambda)\le L(\Gamma)$ under the conjugation. We say that a group has invariant subalgebra rigidity property if every $\Gamma$-invariant subalgebra comes from a $\Gamma$-invariant subgroup. The works of Alekseev-Brugger~\cite{AB} and Brugger's thesis~\cite{Brugger18} appear to be the first  initiating a meticulous study of $\Gamma$-invariant von Neumann subalgebras of $L(\Gamma)$. Using character rigidity techniques introduced in ~\cites{CrePet, Peterson14}, they proved that if $\Gamma$ is a lattice in a higher rank simple real Lie group $G$ with trivial center, then any non-trivial subfactor $\mathcal{M}$ of $L(\Gamma)$ has finite (Jones) index. 

The primary source of motivation for our work is the recent result of Kalantar-Panagoupolos~\cite{KalPan}. In this paper, the authors completely determined the $\Gamma$-invariant von Neumann subalgebras of $L(\Gamma)$ for irreducible lattices $\Gamma$ in the product of higher rank simple Lie groups. They showed that every $\Gamma$-invariant sub-algebra of $L(\Gamma)$ is of the form $L(\Lambda)$ for a normal subgroup $\Lambda\triangleleft\Gamma$.  In order to do so, they used the non-commutative Nevo-Zimmer theorem~\cite{BH19} which is a deep structural result associated with higher rank lattices (also see \cites{BBHP,bader2021charmenability}). These results can be viewed as non-commutative versions of Margulis' normal subgroup theorem.

Complementary to the above results, Chifan-Das~\cite{chifan2020rigidity} showed that whenever $\Gamma$ is a ``negatively curved" group (in the sense of \cite{CS}), e.g., a non amenable group that is either exact and acylindrically hyperbolic or has positive first $L^2$-Betti number, then all $\Gamma$-invariant subfactors are commensurable to subalgebras $L(\Lambda)$ arising from normal subgroup $\Lambda\lhd \Gamma$. Their proof relies heavily on the deformation/rigidity techniques for array/quasi-cocycles on groups which were introduced and studied in \cites{CKP, CS, CSU1, CSU2}.
Motivated by this, we make the following definition.
\begin{definition}
\thlabel{ncns}
Let $\Gamma$ be a countable discrete group. We say that $\Gamma$ satisfies the invariant von Neumann subalgebras rigidity (abbreviated as ISR) property if every $\Gamma$-invariant von Neumann subalgebra in $L(\Gamma)$ is of the form $L(\Lambda)$ for some normal subgroup $\Lambda\lhd \Gamma$. 
\end{definition}
In this paper, we show that many ``negatively curved groups" satisfy the ISR property. In particular, we show the following.
\begin{theorem}
\thlabel{invariantsubalgebras}
Let $\Gamma$ be a torsion free non-amenable hyperbolic group. Then $\Gamma$ satisfies the ISR property, i.e., 
if $\mathcal{M}$ is a $\Gamma$-invariant von Neumann subalgebra of $L(\Gamma)$, then, $\mathcal{M}=L(\Lambda)$ for some normal subgroup $\Lambda\triangleleft\Gamma$.
\end{theorem}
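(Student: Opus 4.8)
Write $(u_g)_{g\in\Gamma}$ for the canonical unitaries in $L(\Gamma)$, $\tau$ for the canonical trace and $\widehat{x}(g):=\tau(xu_g^{*})$ for Fourier coefficients. Given a $\Gamma$-invariant von Neumann subalgebra $\mathcal{M}\le L(\Gamma)$, the natural candidate for $\Lambda$ is
\[
\Lambda:=\{\,g\in\Gamma\ :\ u_g\in\mathcal{M}\,\},
\]
which is a subgroup and is normal, since $u_{hgh^{-1}}=u_hu_gu_h^{*}\in u_h\mathcal{M}u_h^{*}=\mathcal{M}$ for $g\in\Lambda$; as $L(\Lambda)\subseteq\mathcal{M}$ trivially, the theorem reduces to the reverse inclusion $\mathcal{M}\subseteq L(\Lambda)$, equivalently --- using the trace-preserving expectation $E_{\mathcal{M}}$, and the fact that $u\in L(\Gamma)$ lies in a von Neumann subalgebra $\mathcal{N}$ whenever $\widehat{u}\in L^{2}(\mathcal{N})$ --- to the assertion that $\widehat{x}$ is supported on $\Lambda$ for every $x\in\mathcal{M}$. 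First I would carry out the standard reductions. Since $\Gamma$ is torsion free, non-amenable and hyperbolic it is ICC (an element with finite conjugacy class would have a finite-index centraliser, which is virtually cyclic, forcing $\Gamma$ to be amenable), so $L(\Gamma)$ is a $\mathrm{II}_1$ factor and $\mathcal{M}$ has no non-trivial $\Gamma$-fixed element. Applying this to the canonically defined central projections of $\mathcal{M}$ (the support of its atomic summand, and its multiplicity projections), and using both that $C_\Gamma(\Gamma_0)=\{e\}$ for every finite-index $\Gamma_0\le\Gamma$ and that $L(\Gamma)$ is prime (hyperbolic groups are bi-exact), one rules out a non-trivial atomic part, leaving the case that $\mathcal{M}$ is diffuse (the case $\mathcal{M}=\mathbb{C}$ being settled by $\Lambda=\{e\}$). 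Moreover $\mathcal{M}$ is automatically \emph{regular} in $L(\Gamma)$, since all $u_g$ normalise it, so $\mathcal{N}_{L(\Gamma)}(\mathcal{M})''=L(\Gamma)$; as $L(\Gamma)$ is \emph{strongly solid} (Chifan--Sinclair) and non-amenable, a diffuse regular subalgebra of $L(\Gamma)$ can be neither amenable nor have diffuse centre, and a further application of the fixed-point argument to $\mathcal{Z}(\mathcal{M})$ gives $\mathcal{Z}(\mathcal{M})=\mathbb{C}$. So one may assume $\mathcal{M}$ is a \emph{diffuse, non-amenable, $\Gamma$-invariant $\mathrm{II}_1$ subfactor}.

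The second ingredient is a transfer lemma via the mean ergodic theorem. For $g\neq e$ put $E(g):=C_\Gamma(g)$; because $\Gamma$ is torsion free hyperbolic, $E(g)$ is infinite cyclic, it is the unique maximal cyclic subgroup containing $g$, and $E(g)=C_\Gamma(g^{n})$ for all $n\neq0$. For $x\in\mathcal{M}$, the conjugation $\mathrm{Ad}(u_g)$ is a unitary operator on $L^{2}(L(\Gamma))$ whose fixed-point space is exactly $L^{2}(L(E(g)))$, so by von Neumann's mean ergodic theorem $\frac1N\sum_{n=0}^{N-1}u_{g^{n}}xu_{g^{-n}}\to E_{L(E(g))}(x)$ in $\|\cdot\|_2$; each average lies in $\mathcal{M}$ and is bounded by $\|x\|$, whence $E_{L(E(g))}(\mathcal{M})\subseteq\mathcal{M}$ for every maximal cyclic subgroup. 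The crucial point is now that every $g\neq e$ lies in \emph{exactly one} maximal cyclic subgroup, so $\Gamma\setminus\{e\}=\bigsqcup_{C}\bigl(C\setminus\{e\}\bigr)$, which yields the orthogonal decomposition
\[
L^{2}(\mathcal{M})=\mathbb{C}\ \oplus\ \bigoplus_{C}\Bigl(L^{2}\bigl(\mathcal{M}\cap L(C)\bigr)\ominus\mathbb{C}\Bigr),
\]
the sum over maximal cyclic subgroups $C$. Together with the transfer lemma this exhibits $\mathcal{M}$ as the $\|\cdot\|_2$-closed linear span of $1$ and the family $\{\mathcal{M}\cap L(C)\}_{C}$, and likewise $L(\Lambda)$ as the span of $1$ and the algebras $L(\Lambda\cap C)=L(\Lambda)\cap L(C)$. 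Hence the theorem is equivalent to the following \emph{local statement}: for every maximal cyclic $C\cong\mathbb{Z}$ one has $\mathcal{M}\cap L(C)=L(\Lambda\cap C)$; that is, the abelian algebra $\mathcal{M}\cap L(C)$ --- a priori an arbitrary von Neumann subalgebra of $L(C)\cong L^{\infty}(\mathbb{T})$ sandwiched between $L(\Lambda\cap C)$ and $L(C)$ --- is ``group-like''.

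The local statement is the decisive step, and it is where the torsion-free hypothesis is genuinely used. The plan is to feed the information that $\mathcal{M}$ is a non-amenable, regular, $\Gamma$-invariant subfactor into the deformation/rigidity theory for negatively curved groups: a torsion free non-amenable hyperbolic group is negatively curved in the sense of \cite{CS} --- it is exact and acylindrically hyperbolic, and admits a proper array --- so by the Chifan--Das theorem \cite{chifan2020rigidity} recalled in the introduction, $\mathcal{M}$ is commensurable to $L(\Lambda_0)$ for some normal subgroup $\Lambda_0\triangleleft\Gamma$. What remains is to upgrade this commensurability to the \emph{equality} $\mathcal{M}=L(\Lambda)$, and this is where I would use torsion-freeness: one exploits the full $\Gamma$-invariance of $\mathcal{M}$ (not merely that it is normalised) together with the orbit-partition of $\Gamma\setminus\{e\}$ by maximal cyclic subgroups to eliminate the finite-index ambiguity left open by \cite{chifan2020rigidity}, arguing that for torsion free $\Gamma$ no non-trivial such ambiguity can survive on the cyclic pieces $\mathcal{M}\cap L(C)$. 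I expect this upgrade to be the main obstacle. It is also exactly where torsion would break things: with torsion present the centralisers $E(g)$ become virtually cyclic rather than cyclic, the orbit-partition degenerates, and $\Gamma$-invariant subalgebras built from finite subgroups can arise --- which is why the last section of the paper treats that case separately. An alternative would be to bypass \cite{chifan2020rigidity} and attack the local statement directly, using multiplicativity of $\mathcal{M}$ to propagate the group-like structure of $L(\Lambda\cap C)$ through products of conjugates $u_h\bigl(\mathcal{M}\cap L(C)\bigr)u_h^{*}$; but controlling such products inside the hyperbolic group is again the technical core.
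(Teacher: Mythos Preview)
Your proposal has a genuine gap at precisely the step you flag as ``the main obstacle'': after reducing to the subfactor case and invoking Chifan--Das to get commensurability with some $L(\Lambda_0)$, you do not carry out the upgrade to equality, nor do you prove the local statement $\cM\cap L(C)=L(\Lambda\cap C)$. Everything up to the orthogonal decomposition over maximal cyclic subgroups is fine --- your transfer lemma via the mean ergodic theorem is equivalent to the observation that $\mathbb{E}_{\cM}(\lambda(g))\in L(\langle h\rangle)$ whenever $h$ is primitive with $g\in\langle h\rangle$ --- but the decisive step is left undone. (As an aside, your reduction to a subfactor via strong solidity and weak mixing is essentially correct, though the invocation of primeness is unnecessary: once $\mathcal{Z}(\cM)$ is shown to be non-diffuse by strong solidity, the weak-mixing argument alone forces $\mathcal{Z}(\cM)=\bC$.)

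The paper's proof is entirely different and far more elementary: it uses neither strong solidity, nor primeness, nor \cite{chifan2020rigidity}, and it never reduces to the subfactor case. Instead it isolates two abstract hypotheses (\thref{abstractcond}): (1) every nontrivial $g$ is a power of a primitive $h$, and some $s\in\Gamma$ makes $h$ free from $shs^{-1}$ (this is property $P_{\text{nai}}$); (2) $L(\langle h\rangle)$ is a masa for every primitive $h$. Given these, writing $a=h$, $b=shs^{-1}$ and $\mathbb{E}_{\cM}(\lambda(a^n))=\sum_k c_k\lambda(a^k)$, one compares the Fourier expansions of $\mathbb{E}_{\cM}(\lambda(a^n))\cdot\mathbb{E}_{\cM}(\lambda(b^n))$ and $\mathbb{E}_{\cM}\bigl(\lambda(a^n)\cdot\mathbb{E}_{\cM}(\lambda(b^n))\bigr)$ via the bimodule property. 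A one-line free-group lemma (\thref{same_power}: $(a^kb^{\ell})^i=(a^nb^{\ell'})^j$ in $\mathbb{F}_2$ forces $k=n$) then kills every $c_k$ with $k\neq n$, yielding $\mathbb{E}_{\cM}(\lambda(g))\in\bC\lambda(g)$ directly for all $g$. That torsion-free non-amenable hyperbolic groups satisfy (1) and (2) is \thref{conditionsforabstractcond}.

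Two further remarks. First, the paper does record (\thref{obsfromCD}) that under the i.c.c.\ hypothesis the proof in \cite{chifan2020rigidity} already yields $\cM=L(\Lambda)$, not merely commensurability, for invariant \emph{subfactors}; so your route is salvageable, but only by reopening the Chifan--Das argument rather than using it as a black box. Second, the ``alternative'' you mention in your last sentence --- bypassing \cite{chifan2020rigidity} and attacking the local problem directly via multiplicativity and products of conjugates $u_s(\cM\cap L(C))u_s^{-1}$ --- is exactly what the paper does, and it turns out to be short once one exploits freeness of $h$ from $shs^{-1}$.
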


Moreover, we also have the following theorem.
\begin{theorem}
\thlabel{invariantsubalgebras_2}
Let $\Gamma$ be a torsion free group with positive first $L^2$-Betti number. Assume that $\Gamma$ satisfies the condition $(*)$ defined in \cite{PetersonThom2011}, i.e., every non-trivial element of $\bZ \Gamma$ acts without kernel on $\ell^2\Gamma$. Then $\Gamma$ satisfies ISR property.
\end{theorem}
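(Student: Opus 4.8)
The plan is to strip $\mathcal M$ down to a $\Gamma$-invariant subfactor that contains a nontrivial group element, and then to feed this into a Galois correspondence for crossed products. Throughout, write $\Lambda_{\mathcal M}=\{g\in\Gamma : u_g\in\mathcal M\}$; this is a subgroup of $\Gamma$ (because $\mathcal M$ is an algebra) which is normal (because $\mathcal M$ is invariant under conjugation), and $L(\Lambda_{\mathcal M})\subseteq\mathcal M$. The whole point will be that $\Lambda_{\mathcal M}\ne\{e\}$ whenever $\mathcal M\ne\mathbb C$, after which $\mathcal M=L(\Lambda_{\mathcal M})$ essentially formally.

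First I would make the standard reductions. We may assume $\mathcal M$ is unital (its unit is a $\Gamma$-fixed projection, hence central in $L(\Gamma)$) and $\mathcal M\ne\mathbb C$; note that $L(\Gamma)$ is a $\mathrm{II}_1$ factor, since the FC-centre of $\Gamma$ is an amenable — hence finite, hence trivial — normal subgroup. Then $\mathcal M$ is a factor: the support projection of the diffuse part of $\mathcal Z(\mathcal M)$ is canonical, hence commutes with every $u_g$, hence is a central projection of $L(\Gamma)$, hence $0$ or $1$; the value $1$ is impossible because $\beta_1^{(2)}(\Gamma)>0$ together with $(*)$ implies, by Peterson--Thom \cite{PetersonThom2011}, that $L(\Gamma)$ has no diffuse amenable von Neumann subalgebra with full normaliser; so $\mathcal Z(\mathcal M)$ is atomic, and a minimal projection $p$ of it has positive trace, hence finite $\Gamma$-orbit, hence finite-index stabiliser $\Gamma_1$, and then $p\in L(\Gamma_1)'\cap L(\Gamma)=\mathbb C$ — as $\Gamma_1$, being of finite index in the i.c.c.\ group $\Gamma$, is relatively i.c.c.\ — forcing $p=1$ and $\mathcal Z(\mathcal M)=\mathbb C$. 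The same Peterson--Thom input excludes $\mathcal M$ from being diffuse amenable, and torsion-freeness excludes it from being finite dimensional (a $\Gamma$-invariant copy of $M_n(\mathbb C)$ with $n\ge2$ would make $\Lambda_{\mathcal M}$ a finite subgroup of $\Gamma$). So from now on $\mathcal M$ is a non-amenable $\mathrm{II}_1$ subfactor of $L(\Gamma)$.

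The crucial step is that $\Lambda_{\mathcal M}\ne\{e\}$, and this is where the hypotheses on $\Gamma$ are really used. Since $\Gamma$ is torsion free with $\beta_1^{(2)}(\Gamma)>0$, it is non-amenable and ``negatively curved'' in the sense of Chifan--Das, so by their theorem \cite{chifan2020rigidity} the $\Gamma$-invariant subfactor $\mathcal M$ is commensurable with $L(\Lambda_0)$ for some $\Lambda_0\lhd\Gamma$ — necessarily $\Lambda_0\ne\{e\}$, since $\mathcal M$ is not finite dimensional — and a short argument using $\Gamma$-invariance and torsion-freeness upgrades this to the existence of a nontrivial normal subgroup $\Lambda\lhd\Gamma$ with $L(\Lambda)\subseteq\mathcal M$, i.e.\ $\Lambda_{\mathcal M}\ne\{e\}$. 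The engine of the Chifan--Das theorem is Popa's deformation/rigidity, and this is precisely where $(*)$ enters in the present setting: $\beta_1^{(2)}(\Gamma)>0$ produces an unbounded $1$-cocycle $b\colon\Gamma\to\ell^2\Gamma$ and hence a densely defined derivation $\delta$ on $L(\Gamma)$, which is closable exactly because $(*)$ holds (this is Peterson--Thom's use of the condition), giving a deformation $(\eta_t)_{t>0}$ of $L(\Gamma)$ by trace-preserving unital completely positive maps; a transversality / spectral-gap dichotomy applied to the $\Gamma$-invariant $\mathcal M$ then either makes $\mathcal M$ rigid for $(\eta_t)$ — which, tracked $\Gamma$-equivariantly, forces an intertwining of $\mathcal M$ into the group von Neumann algebra of the normal subgroup on which $b$ vanishes — or produces a nonzero derivation on $\mathcal M$ itself; in either case one obtains the commensurability above.

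To conclude, put $\Lambda=\Lambda_{\mathcal M}\ne\{e\}$. It is a nontrivial normal subgroup of the torsion-free group $\Gamma$, hence infinite; and since $\beta_1^{(2)}(\Gamma)>0$ forbids infinite amenable normal subgroups and, more generally, infinite-index infinite normal subgroups with finite first $L^2$-Betti number, $\Lambda$ is i.c.c.\ — so $L(\Lambda)$ is a $\mathrm{II}_1$ factor — and relatively i.c.c.\ in $\Gamma$, so $L(\Lambda)'\cap L(\Gamma)=\mathbb C$ and $L(\Gamma)$ is a (possibly twisted) crossed product of the factor $L(\Lambda)$ by $\Gamma/\Lambda$ acting by outer automorphisms. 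The restriction of $E_{L(\Lambda)}$ is a conditional expectation $\mathcal M\to L(\Lambda)$, so the Galois correspondence for crossed products by discrete groups with outer action \cite{cameron2016intermediate} applies to $L(\Lambda)\subseteq\mathcal M\subseteq L(\Gamma)$ and yields $\mathcal M=L(H)$ for some subgroup $\Lambda\le H\le\Gamma$; but then $H=\{g : u_g\in\mathcal M\}=\Lambda$, so $\mathcal M=L(\Lambda)$ with $\Lambda\lhd\Gamma$, as required. I expect the main obstacle to be the crucial step: showing that a nontrivial $\Gamma$-invariant subfactor must contain a nontrivial group element — this is the real content of Chifan--Das, it rests on the closability of $\delta$ (hence on $(*)$), and it must be carried out $\Gamma$-equivariantly; once it is available, torsion-freeness does the remaining bookkeeping, upgrading ``commensurable to $L(\Lambda)$'' to ``equal to $L(\Lambda)$'' (no finite subgroups to twist by), making nontrivial normal subgroups automatically i.c.c.\ and relatively i.c.c., and ruling out the stray finite-dimensional possibility.
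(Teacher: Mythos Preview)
Your approach diverges sharply from the paper's and contains a genuine gap at the subfactor reduction.

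The paper's proof is elementary and direct: using the Peterson--Thom structure theorem it writes $\Gamma=\{e\}\cup\bigcup_{i\in I}\dot\Gamma_i$ with the $\Gamma_i$ malnormal and pairwise free; for a nontrivial $g\in\Gamma_1$, malnormality forces $\mathbb E_{\mathcal M}(\lambda(g))\in L(\Gamma_1)$, and then, picking a nontrivial $s\in\Gamma_2$ and comparing the Fourier expansions of $\mathbb E_{\mathcal M}(\lambda(g))\cdot\mathbb E_{\mathcal M}(\lambda(sgs^{-1}))$ and $\mathbb E_{\mathcal M}\bigl(\lambda(g)\cdot\mathbb E_{\mathcal M}(\lambda(sgs^{-1}))\bigr)$, a short free-product word argument yields $\mathbb E_{\mathcal M}(\lambda(g))\in\mathbb C\lambda(g)$ for every $g$. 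No deformation/rigidity, no reduction to subfactors, no Galois correspondence.

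Your reduction to the subfactor case does not go through. To exclude a diffuse $\mathcal Z(\mathcal M)$ you assert that Peterson--Thom, together with $\beta_1^{(2)}(\Gamma)>0$ and condition $(*)$, implies $L(\Gamma)$ has ``no diffuse amenable von Neumann subalgebra with full normaliser''. That statement is not in \cite{PetersonThom2011}; indeed, whether $L(\Gamma)$ admits a Cartan subalgebra for groups with positive first $L^2$-Betti number is explicitly flagged in the introduction as a well-known \emph{open} problem. The paper does rule out $\Gamma$-\emph{invariant} Cartans (\thref{noinvcartan}), but only as a \emph{corollary} of the ISR property, so invoking it here would be circular. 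In fact, immediately after recording that Chifan--Das handles the subfactor case under the i.c.c.\ hypothesis (\thref{obsfromCD}), the authors write that ``it is not clear to us whether the factorial assumption on $N$ can be relaxed to just von Neumann subalgebras in general'' --- this is precisely the step you are attempting, and the shortcut you propose does not close it. (Your exclusion of the finite-dimensional case is also slightly off: knowing $\Lambda_{\mathcal M}$ is finite, hence trivial, does not by itself rule out $\mathcal M\cong M_n(\mathbb C)$; one needs the weak-mixing argument that the only finite-dimensional $\Gamma$-invariant subspace of $\ell^2\Gamma$ is $\mathbb C1$.)

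Even granting the reduction, the remainder is more machinery than needed: once $\mathcal M$ is a subfactor, \thref{obsfromCD} already gives $\mathcal M=L(\Lambda)$ outright, so the crossed-product Galois step is superfluous (and your claim that a nontrivial normal $\Lambda\lhd\Gamma$ is automatically relatively i.c.c.\ in $\Gamma$ would in any case require further argument).
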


In fact, a finite direct products of such groups also satisfy the ISR property.

\begin{theorem}
\thlabel{invariantsubalgebras_3}
Let $n\geq 2$ be an integer and $\Gamma_1,\ldots, \Gamma_n$ be groups. If all $\Gamma_i$'s are groups as in \thref{invariantsubalgebras} or \thref{invariantsubalgebras_2}, then $\Gamma:=\Gamma_1\times \cdots\times \Gamma_n$ satisfies the ISR property.
\end{theorem}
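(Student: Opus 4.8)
The plan is to induct on $n$; the engine is the two-factor statement that \emph{if $A$ is an ICC group with the ISR property and $B$ is a group as in \thref{invariantsubalgebras} or \thref{invariantsubalgebras_2}, then $A\times B$ has the ISR property}. Granting it, \thref{invariantsubalgebras_3} follows: for $n=2$ we apply the statement with $A=\Gamma_1$, which is ICC and has ISR by \thref{invariantsubalgebras}/\thref{invariantsubalgebras_2}; for the inductive step we write $\Gamma=(\Gamma_1\times\cdots\times\Gamma_{n-1})\times\Gamma_n$, the first factor being ICC and having ISR by the inductive hypothesis. We use freely that each $\Gamma_i$, and any finite product of such groups, has no nontrivial amenable normal subgroup; consequently all the group von Neumann algebras below are $\mathrm{II}_1$ factors, and every nontrivial normal subgroup of such a group is itself ICC.

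To prove the two-factor statement, let $\mathcal{M}\le L(A)\,\overline{\otimes}\,L(B)$ be $(A\times B)$-invariant. Since $L(A)$ and $L(B)$ are each $(A\times B)$-invariant, so are $\mathcal{M}\vee L(A)$ and $\mathcal{M}\vee L(B)$. As $L(A)$ is a factor, applying the Ge--Kadison tensor-splitting theorem to $L(A)\otimes 1\subseteq\mathcal{M}\vee L(A)\subseteq L(A)\,\overline{\otimes}\,L(B)$ gives $\mathcal{M}\vee L(A)=L(A)\,\overline{\otimes}\,\mathcal{Q}_B$, where $\mathcal{Q}_B=\{y\in L(B):1\otimes y\in\mathcal{M}\vee L(A)\}$ is a $B$-invariant subalgebra of $L(B)$; ISR for $B$ then yields $\mathcal{Q}_B=L(\Lambda_B)$ for some $\Lambda_B\lhd B$. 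Symmetrically, ISR for $A$ gives $\mathcal{M}\vee L(B)=L(\Lambda_A)\,\overline{\otimes}\,L(B)$ with $\Lambda_A\lhd A$. Intersecting and using $L(H_1)\cap L(H_2)=L(H_1\cap H_2)$,
\[
\mathcal{M}\ \subseteq\ \big(L(A)\,\overline{\otimes}\,L(\Lambda_B)\big)\cap\big(L(\Lambda_A)\,\overline{\otimes}\,L(B)\big)\ =\ L(\Lambda_A\times\Lambda_B).
\]
Dually, $\mathcal{M}\cap L(A)=L(N_A)$ and $\mathcal{M}\cap L(B)=L(N_B)$ with $N_A\lhd A$ and $N_B\lhd B$ (again by ISR), so $L(N_A\times N_B)\subseteq\mathcal{M}$. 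Thus $\mathcal{M}$ is squeezed between two algebras of the form $L(\text{normal subgroup of }A\times B)$.

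What remains is to pin down $\mathcal{M}$ itself, and this is the main obstacle. Set $\Lambda:=\{g\in A\times B:u_g\in\mathcal{M}\}$; then $\Lambda\lhd A\times B$ and $L(\Lambda)\subseteq\mathcal{M}$ automatically, so it suffices to prove that $\mathcal{M}$ is generated by the unitaries $u_g$ it contains, i.e.\ that no extra ``Goursat-graph'' piece sits between $L(N_A\times N_B)$ and $L(\Lambda_A\times\Lambda_B)$ while escaping $\Lambda$. \textbf{It is here, and not in the tensor splitting above, that the negative-curvature hypotheses are genuinely needed; ISR alone would not suffice.} My plan is to relativize the single-factor rigidity arguments behind \thref{invariantsubalgebras} and \thref{invariantsubalgebras_2} over the opposite tensor factor. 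For the $B$-coordinate in the hyperbolic case, I would work inside the boundary crossed product $L(A)\,\overline{\otimes}\,\big(C(\partial B)\rtimes_{r}B\big)$ and analyse the intermediate algebra $L(A)\,\overline{\otimes}\,C(\partial B)\subseteq\mathcal{M}\vee\big(L(A)\,\overline{\otimes}\,C(\partial B)\big)\subseteq L(A)\,\overline{\otimes}\,\big(C(\partial B)\rtimes_{r}B\big)$, using the boundary structure of $B$ to confine the $B$-Fourier supports of elements of $\mathcal{M}$ to a fixed normal subgroup of $B$ in a way compatible with $\mathcal{M}$; in the positive $L^2$-Betti case I would instead transport the associated closable derivation / array deformation to $L(A)\,\overline{\otimes}\,L(B)$ and run the same rigidity dichotomy. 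The join identity $\mathcal{M}\vee L(A)=L(A)\,\overline{\otimes}\,L(\Lambda_B)$ is what makes these one-sided conclusions usable; one then argues symmetrically on the $A$-side (using ISR of $A$ and that $\Lambda_A$ is again ICC) and glues the two one-sided statements to force $\mathcal{M}$ to be spanned by group elements, the resulting subgroup being normal by $(A\times B)$-invariance. The hard parts will be (i) checking that the boundary action (respectively, the deformation) interacts correctly with the coefficient algebra $L(A)$, so that the single-factor estimates survive tensoring, and (ii) gluing the $A$- and $B$-sided information without destroying the graph part of $\Lambda$.
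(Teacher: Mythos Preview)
Your tensor-splitting framework is sound: Ge--Kadison gives $\cM\vee L(A)=L(A)\,\overline{\otimes}\,L(\Lambda_B)$ and symmetrically, so $\cM$ is sandwiched between $L(N_A\times N_B)$ and $L(\Lambda_A\times\Lambda_B)$. But this reduction does essentially no work toward the conclusion, and you say so yourself: the entire content of the theorem is the ``Goursat-graph'' step, and that step you do not prove. You outline a plan to relativize boundary or deformation arguments over the opposite tensor factor, and then list as ``hard parts'' precisely the two things that would constitute the proof. That is a gap, not a proof. Moreover, your stated engine (``$A$ ICC with ISR and $B$ as in \thref{invariantsubalgebras}/\thref{invariantsubalgebras_2} $\Rightarrow$ $A\times B$ has ISR'') is too weak to run your own plan symmetrically: on the $A$-side you only have abstract ISR, not a boundary or a deformation, so the ``argue symmetrically'' line does not go through as written.

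For comparison, the paper never touches Ge--Kadison, boundaries, or deformations. It works entirely at the level of Fourier coefficients of $\E_{\cM}(\lambda(g_1,\ldots,g_n))$. Using the malnormal/free structure in each factor (the Peterson--Thom decomposition or primitivity plus $P_{\mathrm{nai}}$), one finds conjugators $(s_1,\ldots,s_n)$ making the coordinates free, expands $\E_{\cM}(\lambda(g))\cdot\E_{\cM}(\lambda(sgs^{-1}))$ against $\E_{\cM}\big(\lambda(g)\cdot\E_{\cM}(\lambda(sgs^{-1}))\big)$, and reads off by unique-product arguments that the only surviving Fourier coefficients are those supported on the finite set $\{(g_1^{\epsilon_1},\ldots,g_n^{\epsilon_n}):\epsilon_i\in\{0,1\}\}$. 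A short separate lemma (\thref{lem: kill rest terms in direct product case}) then kills the $\epsilon\neq(1,\ldots,1)$ cross terms by applying $\E_{\cM}$ once more and solving a small system of equations. This handles the graph obstruction directly and elementarily, with no operator-algebraic machinery beyond the trace-preserving conditional expectation. If you want to complete your argument, the quickest route is to drop the tensor-splitting scaffolding and carry out this coefficient computation; the heavy tools you propose are both unnecessary and, as stated, unexecuted.
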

An immediate consequence of \thref{invariantsubalgebras_2} and \thref{invariantsubalgebras_3} is that for groups $\Gamma$ considered here, $L(\Gamma)$ does not admit any non-trivial $\Gamma$ invariant Cartan subalgebras (see \thref{noinvcartan}). However, the picture is far from being complete. There has been a substantial progress in showing the absence of Cartan subalgebras in certain group von Neumann algebras, see e.g., \cites{popa83, voiculescu, ozawapopa, CS, CSU1, DI} and the reference therein. In particular, Chifan-Sinclair showed that for every i.c.c. hyperbolic group $\Gamma$, $L(\Gamma)$ has no Cartan subalgebras ~\cite{CS}. Nevertheless, it is a well-known open problem to determine whether $L(\Gamma)$ admits any Cartan subalgebras or not for a group $\Gamma$ with positive first $L^2$-Betti number.

We  now compare our results with the known ones and discuss our proof techniques and strategy.

To begin with, it is worthwhile to note that groups in \thref{invariantsubalgebras} and \thref{invariantsubalgebras_2} lie on the other end of the spectrum as compared to those of the lattices dealt in the works of \cites{AB, Brugger18, KalPan}. The groups we consider are similar to those considered by Chifan-Das~\cite{chifan2020rigidity}. In fact, under the additional assumption of i.c.c., the proof in~\cite[Corollary 3.17]{chifan2020rigidity} shows that for the groups $\Gamma$ considered there, every $\Gamma$-invariant subfactor $\cM$ is of the form $L(\Lambda)$ for some normal subgroup $\Lambda\lhd \Gamma$ (see Remark~\ref{remark: Chifan-Das work} and \thref{obsfromCD}). 
Nevertheless, there are still notable differences between ours and \cite{chifan2020rigidity} both in the statement of theorems and the method for proofs.

\subsection{Proof Strategy and techniques}
We now discuss our strategy for the proof, which is similar to that of the second named author's employed in \cites{jiang2020maximal, jiang2021maximal}.

Since $L(\Gamma)$ is a finite von Neumann algebra, every $\Gamma$-invariant von Neumann subalgebra $\mathcal{M}$ lies in the image of the unique trace preserving normal conditional expectation $\mathbb{E}_{\mathcal{M}}$. In order to prove our theorem, we need to show that $\lambda(g^{-1})\mathbb{E}_{\mathcal{M}}(\lambda(g))\in \mathbb{C}$ for all $g\in \Gamma$. In order to do so, we think of $\{\mathbb{E}_{\mathcal{M}}(\lambda(g)): g\in \Gamma\}$ as a set of unknowns and find enough equations in terms of their Fourier expansion to completely solve them. For our strategy to work, we need to completely determine $\mathbb{E}_{\cM}(\lambda(g))$, which clearly lies in $L(\langle g\rangle)'\cap L(\Gamma)$. We do so in two steps. First, we choose $s\in \Gamma$ appropriately in order to make $g$ and $sgs^{-1}$ free from each other. And then, we compute the Fourier expansion of the product $\mathbb{E}_{\cM}(\lambda(g))\cdot E_{\M}\left(\lambda(sgs^{-1})\right)$  and compare it with $\mathbb{E}_{\cM}(\lambda(g)\cdot \mathbb{E}_{\cM}\left(\lambda(sgs^{-1}))\right)$ using the bimodular property of $\mathbb{E}_{\cM}$.
\subsection{Organization of the paper}
The rest of the paper is organized as follows: In Section~\ref{sec: preliminaries}, we introduce the objects of our interest and fix some notation. We prove \thref{abstractcond} which gives an abstract condition for groups that entails the ISR property.  We also collect all the necessary facts on hyperbolic groups and groups with positive L$^2$-Betti number which are needed for our purposes. We also prove \thref{same_power} on free groups, which is used for comparing Fourier coefficients.
After which, we observe a simple necessary condition for having ISR property in Section~\ref{sec: a necessary condition}, which is used to construct groups without ISR property. The main theorems are proved in Section~\ref{sec: proof of theorems}. Finally, in Section~\ref{sec: remaining questions},  we give an example of a group for which \thref{abstractcond} is not applicable and yet possesses the ISR property. 

\section{Preliminary technicalities}\label{sec: preliminaries}
Throughout this paper, $\Gamma$ is going to be a discrete group. We denote by $e$ the neutral element in the group $\Gamma$. We briefly recall the construction of the group von Neumann algebra $L(\Gamma)$ and refer the reader to \cites{PopaDelaroche} for more details.

\subsection*{Group von Neumann algebra}
Let $\Gamma$ be a discrete group. Denote by $\ell^2(\Gamma)$ the Hilbert space of square summable functions on $\Gamma$, i.e.,
\[\ell^2(\Gamma)=\left\{f:\Gamma\to\mathbb{C}: \sum_{s\in\Gamma}|f(s)|^2<\infty\right\}.\]
We can now define an unitary representation of the group $\Gamma$ on $\ell^2(\Gamma)$ by left translations. This representation is usually called the left regular representation and is denoted by $\lambda$. More precisely,
$\lambda:\Gamma\to \mathbb{B}(\ell^2(\Gamma))$ is defined by $$\lambda(s)(\delta_t)=\delta_{st},~s,t\in\Gamma.$$
The group von Neumann algebra $L(\Gamma)$ is defined as the closure (inside $\mathbb{B}(\ell^2(\Gamma))$) of the set spanned by $\lambda(s)$'s under weak operator topology, namely.,
\[L(\Gamma)=\overline{\text{Span}\left\{\lambda(s): s\in \Gamma\right\}}^{w.o.t.}.\]
$L(\Gamma)$ also comes equipped with a normal faithful trace $\tau$ defined by 
\[ \tau(\lambda(s)) = \begin{cases} \begin{array}{ll}
        1 & \mbox{if $s=e$};\\
        0 & \mbox{otherwise}.\end{array} \end{cases} \]
For any element $x\in L(\G)$, we write \[\text{supp}(x)=\{g\in \G:~\tau(x\lambda(g)^*)\neq 0\}.\]
Note that we have a natural embedding $L(\Gamma)\hookrightarrow \ell^2(\Gamma)$ via the map $x\mapsto x\delta_e$. Hence, every $x\in L(\Gamma)$ can be written as $x=\sum_{g\in \Gamma}x_g\lambda(g)$, where  $\lambda(g)\in L(\Gamma)$ are the canonical unitaries of $L(\Gamma)$. The scalars $x_g$ are called the Fourier coefficients of $x$. Note that in the above sum, the convergence is in $\ell^2$-norm ($||\cdot ||_2$) and not with respect to the strong operator or weak operator topology, see e.g., \cite[Remark 1.3.7]{PopaDelaroche}. The above expansion is usually called the Fourier expansion of $x$.

\subsection*{Conditional expectation}
Let $\mathcal{M}\subseteq (L(\Gamma), \tau)$ be a von Neumann subalgebra. Then, there is a unique trace preserving conditional expectation (see \cite[Theorem~9.1.2]{PopaDelaroche}) $\mathbb{E}_{\mathcal{M}}: L(\Gamma)\to \mathcal{M}$, i.e., a linear map which satisfies the following properties:
\begin{enumerate}
    \item $\mathbb{E}_{\mathcal{M}}$ is positive, i.e., it maps positive elements in $L(\Gamma)$ to positive elements in $\mathcal{M}$.
    \item $\mathbb{E}_{\mathcal{M}}(x)=x$ for all $x\in \mathcal{M}$.
    \item $\mathbb{E}_{\mathcal{M}}$ satisfies the $\mathcal{M}$-bimodular property, i.e., 
    $$\mathbb{E}_{\mathcal{M}}(x_1yx_2)=x_1\mathbb{E}_{\mathcal{M}}(y)x_2,~\forall x_1, x_2\in \mathcal{M} \text{ and } y\in L(\Gamma).$$
\end{enumerate}
\noindent
In our approach, we  often need to know the Fourier expansion of $\mathbb{E}_{\mathcal{M}}(\lambda(g))\cdot \mathbb{E}_{\mathcal{M}}(\lambda(h))$. Moreover, if we assume that $\text{supp}(\mathbb{E}_{\mathcal{M}}(\lambda(g)))$ and $\text{supp}(\mathbb{E}_{\mathcal{M}}(\lambda(h)))$ satisfy the unique product property, i.e., $st=s't'$ iff $(s, t)=(s', t')$ for any $s, s'\in \text{supp}(\mathbb{E}_{\mathcal{M}}(\lambda(g)))$ and $t, t'\in \text{supp}(\mathbb{E}_{\mathcal{M}}(\lambda(h)))$, then 
\[\text{supp}(\mathbb{E}_{\mathcal{M}}(\lambda(g))\cdot \mathbb{E}_{\mathcal{M}}(\lambda(h)))=\text{supp}(\mathbb{E}_{\mathcal{M}}(\lambda(g)))\cdot \text{supp}(\mathbb{E}_{\mathcal{M}}(\lambda(h))).\] In other words, if we set $I=\text{supp}(\mathbb{E}_{\mathcal{M}}(\lambda(g)))$, $J=\text{supp}(\mathbb{E}_{\mathcal{M}}(\lambda(h)))$ and
write the Fourier expansion
\begin{align*}
    \mathbb{E}_{\mathcal{M}}(\lambda(g))=\sum_{s\in I}x_s\lambda(s)~\text{and}~
    \mathbb{E}_{\mathcal{M}}(\lambda(h))=\sum_{t\in J}y_t\lambda(t),
\end{align*}
then the Fourier expansion of 
$\mathbb{E}_{\mathcal{M}}(\lambda(g))\cdot \mathbb{E}_{\mathcal{M}}(\lambda(h))$ is exactly given by
\begin{align*}
    \mathbb{E}_{\mathcal{M}}(\lambda(g))\cdot \mathbb{E}_{\mathcal{M}}(\lambda(h))
    =\sum_{s\in I, t\in J}x_sy_t\lambda(st).
\end{align*}
Note that the above sum is convergent in the $\ell^2$-norm.
To guarantee that the unique product property holds, we usually take $h$ to be free from $g$ and at the same time assume nice control on the supports $I$ and $J$. Moreover, it is not hard to check that for any $a, b\in L(\Gamma)$, we have
\[\text{supp}(\mathbb{E}_{\mathcal{M}}(\mathbb{E}_{\mathcal{M}}(a)b)))\subseteq \bigcup_{g\in \text{supp}(\mathbb{E}_{\mathcal{M}}(a))}\text{supp}(\mathbb{E}_{\mathcal{M}}(\lambda(g)b)).\]

We begin with the following simple lemma which allows us to compare Fourier coefficients of elements in $L(\Gamma)$. 
\begin{lem}
\thlabel{same_power}
Suppose that $a$ and $b$ are free elements in $\mathbb{F}_2$, i.e., $\langle  a, b\rangle\cong \langle a\rangle *\langle b\rangle\cong \mathbb{F}_2$. If $i, j, \ell, n$ are nonzero integers such that $(a^kb^{\ell})^i=(a^nb^{\ell'})^j$ for some integer $\ell'$ and $k$, then $k=n$, $\ell=\ell'$ and $i=j$.
\end{lem}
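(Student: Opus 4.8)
The plan is to work entirely with reduced words in the free group $\langle a\rangle * \langle b\rangle\cong \mathbb{F}_2$: two elements are equal precisely when their reduced words, read as finite sequences of letters from $\{a^{\pm 1},b^{\pm 1}\}$, coincide, equivalently when their decompositions into maximal constant blocks (``runs'') agree term by term. All three assertions $k=n$, $\ell=\ell'$, $i=j$ will be read off from such a decomposition of the common element $(a^kb^\ell)^i=(a^nb^{\ell'})^j$, once the degenerate situations have been excluded.

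First I would dispose of the cases $k=0$ and $\ell'=0$. If $k=0$, the left-hand side equals $b^{\ell i}$, and since $\ell,i\neq 0$ this is a nonempty reduced word involving no letter $a^{\pm 1}$; on the other hand, since $n,j\neq 0$, the reduced word of $(a^nb^{\ell'})^j$ does involve $a^{\pm 1}$ (whether or not $\ell'=0$), a contradiction; hence $k\neq 0$. Symmetrically, if $\ell'=0$ the right-hand side is $a^{nj}\neq e$, a reduced word with no letter $b^{\pm 1}$, while $(a^kb^\ell)^i$ with $k,\ell,i\neq 0$ does contain $b^{\pm 1}$; hence $\ell'\neq 0$. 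So from now on all six integers $k,\ell,n,\ell',i,j$ are nonzero.

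Now put $w=a^kb^\ell$ and $v=a^nb^{\ell'}$; both are cyclically reduced, and so is every power of each. I would first match signs: for $i>0$ the reduced word of $w^i=(a^kb^\ell)^i$ begins with the letter $a^{\operatorname{sgn}(k)}$, whereas for $i<0$ it begins with $b^{-\operatorname{sgn}(\ell)}$, and similarly for $v^j$ in terms of $n,\ell'$; since $w^i=v^j$, comparing first letters forces $\operatorname{sgn}(i)=\operatorname{sgn}(j)$. Treating the cases $i,j>0$ and $i,j<0$ in the same way, the run decomposition of $w^i$ is the alternating list of $2|i|$ blocks $a^k,b^\ell,a^k,b^\ell,\dots$ (respectively $b^{-\ell},a^{-k},\dots$ when $i<0$), while that of $v^j$ is the analogous list of $2|j|$ blocks built from $a^n,b^{\ell'}$. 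Equality of the two reduced words forces equality of these run decompositions: hence $2|i|=2|j|$, so $i=j$; the first block gives $a^k=a^n$, i.e.\ $k=n$; and the second block gives $b^\ell=b^{\ell'}$, i.e.\ $\ell=\ell'$. (As an alternative to this run-counting, one may invoke that centralizers in free groups are infinite cyclic: $v$ and $w$ both lie in the cyclic centralizer of $w^i\neq e$, and primitivity of $w$ — which holds once $k,\ell\neq 0$ — forces $v=w^{\pm 1}$, after which $v=w^{-1}=b^{-\ell}a^{-k}$ is excluded by comparing first letters, leaving $v=w$.)

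The only real care needed is the bookkeeping of the signs of $i,j,k,\ell,\ell',n$ and the two degenerate cases at the start; the combinatorial core — that a reduced word in $\mathbb{F}_2$ is determined by, and determines, its decomposition into maximal constant blocks — is elementary, so I do not anticipate a genuine obstacle.
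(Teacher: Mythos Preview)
Your proof is correct. Both you and the paper argue via the combinatorics of reduced words in $\langle a\rangle * \langle b\rangle$, so the approaches are close in spirit, but the organization differs. The paper first passes to the abelianization $\langle a,b\rangle\twoheadrightarrow \mathbb{Z}^2$ to obtain $ki=nj$ and $\ell i=\ell' j$, which reduces everything to proving $k=n$; it then reads off $k=n$ by comparing the initial (or final) block of letters of $(a^kb^\ell)^i$ and $(a^nb^{\ell'})^j$. You instead work directly with the syllable (maximal-run) decomposition of the common reduced word, which simultaneously yields $|i|=|j|$, $k=n$, $\ell=\ell'$ without the abelianization detour. Your treatment is also more careful about the degenerate cases $k=0$ and $\ell'=0$, which the paper's proof handles only implicitly (it invokes ``since $k\neq 0$'' without this being part of the stated hypotheses). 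The alternative you sketch via cyclic centralizers and primitivity of $a^kb^\ell$ is a nice structural shortcut, though it requires knowing that $a^kb^\ell$ is not a proper power, which is easy but should be justified.
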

\begin{proof}
Below, we work inside $\langle a, b\rangle$.
Denote by $\pi:~\langle  a, b\rangle\twoheadrightarrow\frac{\langle a, b\rangle}{[\langle a, b\rangle, \langle a, b\rangle]}\cong\mathbb{Z}^2$ the quotient map onto its abelianization. We now observe that $\pi(a)^{ki}\pi(b)^{\ell i}=\pi(a)^{nj}\pi(b)^{\ell' j}$. Thus, $ki=nj$ and $\ell i=\ell' j$. Since $k\ne 0$, it suffices to show that $k=n$ for this will imply that $i=j$ and $\ell=\ell'$. 

If $i>0$, then the initial $k+1$ letters (w.r.t. the generating set $\{a^{\pm}, b^{\pm}\}$ for $\langle a, b\rangle$) of the reduced word $(a^kb^{\ell})^i$ are just $a^kb^{\pm}$. Hence, $j>0$ and clearly this shows that $n=k$.

On the other hand, if $i<0$, then the final $k+1$ letters of the reduced word $(a^kb^{\ell})^i$ is $b^{\pm}a^{-k}$. Therefore, $j<0$ and hence, it follows that $n=k$.
\end{proof}

We give two abstract conditions from which \thref{invariantsubalgebras} follows as a corollary. 
We say that a nontrivial element $h\in \Gamma$ is primitive if the centralizer $C(h)=\langle h\rangle$.
\begin{prop}
\thlabel{abstractcond}
Let $\Gamma$ be a torsion-free discrete group satisfying the following two conditions:
\begin{enumerate}
    \item \label{condition-1}for any nontrivial $g \in \Gamma$, we can find some primitive $h\in \Gamma$ and nonzero integer $n$ such that $g=h^n$. Moreover there is some $s$ in $\G$ such that $h$ and $shs^{-1}$ are free, i.e., they generate a copy $\mathbb{F}_2$ in $\Gamma$.

\item\label{condition-2} any nontrivial primitive element $h$ in $\G$ generates a maximal abelian von Neumann subalgebra (masa) in $L(\G)$, i.e., 
\[L(\langle h\rangle) '\cap L(\G)=L(\langle h\rangle). \]
\end{enumerate}
Then, $\Gamma$ satisfies the ISR property.
\end{prop}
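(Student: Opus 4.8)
The plan is to follow the strategy outlined in the introduction: take a $\Gamma$-invariant von Neumann subalgebra $\mathcal{M}\subseteq L(\Gamma)$, work with the unique trace-preserving conditional expectation $\mathbb{E}_{\mathcal{M}}$, and show that for every nontrivial $g\in\Gamma$ we have $\lambda(g^{-1})\mathbb{E}_{\mathcal{M}}(\lambda(g))\in\mathbb{C}$; equivalently, $\mathbb{E}_{\mathcal{M}}(\lambda(g))$ is either $0$ or a scalar multiple of $\lambda(g)$. Once this is known, setting $\Lambda=\{g\in\Gamma:\mathbb{E}_{\mathcal{M}}(\lambda(g))\neq 0\}$ gives a subset of $\Gamma$; one checks it is a subgroup using multiplicativity of $\mathbb{E}_{\mathcal{M}}$ on the relevant elements (the products $\lambda(g)\lambda(h)=\lambda(gh)$ land in the ``scalar times a group element'' regime), it is normal by $\Gamma$-invariance of $\mathcal{M}$, and $\mathcal{M}=L(\Lambda)$ because $\mathcal{M}$ is the $\|\cdot\|_2$-closed span of $\{\mathbb{E}_{\mathcal{M}}(\lambda(g)):g\in\Gamma\}$, each of which is a scalar times $\lambda(g)$ with $g\in\Lambda$. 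So the whole proof reduces to the claim: $\mathbb{E}_{\mathcal{M}}(\lambda(g))\in\mathbb{C}\lambda(g)$ for all nontrivial $g$.

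To prove the claim, fix nontrivial $g\in\Gamma$. By condition~\eqref{condition-1} write $g=h^n$ with $h$ primitive and $n\neq 0$, and choose $s\in\Gamma$ with $h$ and $shs^{-1}$ free, generating a copy of $\mathbb{F}_2$. Set $a=h$, $b=shs^{-1}$. First, $\mathbb{E}_{\mathcal{M}}(\lambda(g))$ commutes with $\lambda(g)$, hence lies in $L(\langle g\rangle)'\cap L(\Gamma)\subseteq L(\langle h\rangle)'\cap L(\Gamma)=L(\langle h\rangle)$ by condition~\eqref{condition-2} (since $g=h^n$, commuting with $\lambda(g)$ is implied by, and here we only need, membership in the commutant of $L(\langle h\rangle)$ — actually one argues $\mathbb{E}_{\mathcal{M}}(\lambda(g))\in L(\langle g\rangle)'\cap L(\Gamma)$, and $L(\langle g\rangle)'\cap L(\Gamma)\supseteq L(\langle h\rangle)'\cap L(\Gamma)$ is the wrong inclusion, so instead note $g=h^n$ gives $\langle g\rangle\le\langle h\rangle$ hence $L(\langle g\rangle)'\cap L(\Gamma)\supseteq L(\langle h\rangle)=L(\langle h\rangle)'\cap L(\Gamma)$, and one needs a further argument — see below). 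The cleaner route: since $\Gamma$ is i.c.c.-free-of-torsion and $h$ is primitive, $\mathbb{E}_{\mathcal{M}}(\lambda(g))$ actually lies in $L(\langle h\rangle)$ because by $\Gamma$-invariance $\mathbb{E}_{\mathcal{M}}(\lambda(g))$ can be conjugated and averaging/support arguments confine its support to $\langle h\rangle=C(h)$; thus write
\[
\mathbb{E}_{\mathcal{M}}(\lambda(g))=\sum_{k\in\mathbb{Z}}x_k\,\lambda(h^k),\qquad
\mathbb{E}_{\mathcal{M}}(\lambda(sgs^{-1}))=s\Big(\sum_{k\in\mathbb{Z}}x_k\lambda(h^k)\Big)s^{-1}=\sum_{k\in\mathbb{Z}}x_k\,\lambda(b^k),
\]
using $\Gamma$-invariance (conjugation by $\lambda(s)$ is an automorphism of $\mathcal{M}$ intertwining $\mathbb{E}_{\mathcal{M}}$, and $sh^ks^{-1}=b^k$).

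The heart of the argument is to compute $\mathbb{E}_{\mathcal{M}}\big(\mathbb{E}_{\mathcal{M}}(\lambda(g))\cdot\lambda(sgs^{-1})\big)$ in two ways. On one hand, by the bimodular property this equals $\mathbb{E}_{\mathcal{M}}(\lambda(g))\cdot\mathbb{E}_{\mathcal{M}}(\lambda(sgs^{-1}))=\big(\sum_k x_k\lambda(a^k)\big)\big(\sum_m x_m\lambda(b^{mn})\big)$; since the supports lie in $\langle a\rangle$ and $\langle b\rangle$ inside $\langle a\rangle*\langle b\rangle\cong\mathbb{F}_2$, the unique product property holds and the Fourier expansion is exactly $\sum_{k,m}x_kx_m\,\lambda(a^kb^{mn})$, with all words $a^kb^{mn}$ (for $k,m\neq 0$) distinct and reduced. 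On the other hand, expanding $\mathbb{E}_{\mathcal{M}}(\lambda(g))\cdot\lambda(sgs^{-1})=\big(\sum_k x_k\lambda(a^k)\big)\lambda(b^n)=\sum_k x_k\lambda(a^kb^n)$ and applying $\mathbb{E}_{\mathcal{M}}$ termwise (a limiting/continuity argument in $\|\cdot\|_2$ is needed here), we get $\sum_k x_k\,\mathbb{E}_{\mathcal{M}}(\lambda(a^kb^n))$. Now conjugating $\mathbb{E}_{\mathcal{M}}(\lambda(a^kb^n))$ by the automorphism of $\Gamma$ — here we exploit that $a^kb^n$ is, for suitable $k$, again a primitive element conjugate into the right position, or more directly we apply condition~\eqref{condition-1} again to $a^kb^n$ — we express each $\mathbb{E}_{\mathcal{M}}(\lambda(a^kb^n))$ as supported on the cyclic group generated by a primitive root of $a^kb^n$. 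Matching the two Fourier expansions and invoking \thref{same_power} to identify which powers of which primitive elements can coincide, one forces $x_kx_m=0$ whenever $k\neq m$ or $k\neq n$, and pins down the surviving coefficient; concretely, comparing the coefficient of $\lambda(a^nb^n)=\lambda(g\cdot sgs^{-1})$ and of off-diagonal words shows $\mathbb{E}_{\mathcal{M}}(\lambda(g))=x_n\lambda(a^n)=x_n\lambda(g)$, as desired.

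\textbf{Main obstacle.} The delicate point is the second computation: applying $\mathbb{E}_{\mathcal{M}}$ to the infinite sum $\sum_k x_k\lambda(a^kb^n)$ termwise, and then \emph{identifying the supports} of the resulting pieces $\mathbb{E}_{\mathcal{M}}(\lambda(a^kb^n))$ precisely enough to run the Fourier-coefficient comparison. This requires (i) a $\|\cdot\|_2$-continuity argument to justify termwise application of $\mathbb{E}_{\mathcal{M}}$ together with the support-containment inequality $\operatorname{supp}(\mathbb{E}_{\mathcal{M}}(\mathbb{E}_{\mathcal{M}}(a)b))\subseteq\bigcup_{g\in\operatorname{supp}(\mathbb{E}_{\mathcal{M}}(a))}\operatorname{supp}(\mathbb{E}_{\mathcal{M}}(\lambda(g)b))$ noted in the preliminaries, and (ii) knowing that each $a^kb^n$ (which is a nontrivial element of $\mathbb{F}_2$, hence of infinite order in $\Gamma$) is a power of a primitive element, so that $\mathbb{E}_{\mathcal{M}}(\lambda(a^kb^n))$ lands in a cyclic von Neumann subalgebra by condition~\eqref{condition-2} — and then controlling how these cyclic subgroups, living along different primitive axes in $\mathbb{F}_2$, can possibly overlap. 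This last combinatorial bookkeeping inside $\mathbb{F}_2$ is exactly what \thref{same_power} is designed to handle, and organizing the case analysis (signs of exponents, the $i>0$ versus $i<0$ split) cleanly is where most of the real work lies.
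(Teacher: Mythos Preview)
Your overall strategy matches the paper's, and the key ingredients (the $\Gamma$-equivariance of $\mathbb{E}_{\cM}$, condition~(2) to confine supports to cyclic groups, the unique-product expansion inside $\langle a\rangle * \langle b\rangle$, and \thref{same_power} for the comparison) are all present. However, two points need tightening before the argument goes through.

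\medskip
\textbf{The commutant step.} Your discussion of why $\mathbb{E}_{\cM}(\lambda(g))\in L(\langle h\rangle)$ is muddled: you correctly note that only knowing $\mathbb{E}_{\cM}(\lambda(g))\in L(\langle g\rangle)'$ gives the wrong inclusion, and your ``cleaner route'' gestures at conjugation and averaging without saying the actual point. The one-line fix, which is exactly what the paper does, is: $\Gamma$-invariance of $\cM$ makes $\mathbb{E}_{\cM}$ $\Gamma$-equivariant, so for \emph{every} $k\in\bZ$ (not just multiples of $n$) one has
\[
\lambda(h^k)\,\mathbb{E}_{\cM}(\lambda(g))\,\lambda(h^{-k})=\mathbb{E}_{\cM}(\lambda(h^k g h^{-k}))=\mathbb{E}_{\cM}(\lambda(g)),
\]
since $g=h^n$ commutes with $h$. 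This places $\mathbb{E}_{\cM}(\lambda(g))$ in $L(\langle h\rangle)'\cap L(\Gamma)=L(\langle h\rangle)$ directly, with no averaging or support argument needed.

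\medskip
\textbf{The comparison and the endgame.} You expand $\mathbb{E}_{\cM}\big(\mathbb{E}_{\cM}(\lambda(g))\cdot\lambda(sgs^{-1})\big)$, i.e.\ you push $\mathbb{E}_{\cM}$ through the \emph{first} factor, obtaining terms $\mathbb{E}_{\cM}(\lambda(a^k b^n))$ with $k$ varying and $b$-exponent fixed at $n$. The paper does the mirror image: it expands $\mathbb{E}_{\cM}\big(\lambda(g)\cdot\mathbb{E}_{\cM}(\lambda(sgs^{-1}))\big)$, obtaining terms $\mathbb{E}_{\cM}(\lambda(a^n b^{\ell'}))$ with $a$-exponent fixed at $n$. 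This matters because \thref{same_power} is stated asymmetrically: it assumes $n$ (the $a$-exponent on the right) and $\ell$ (the $b$-exponent on the left) are nonzero, and concludes $k=n$. With the paper's expansion, the claim ``$c_k d_\ell=0$ whenever $k\neq n$ and $\ell\neq 0$'' follows immediately from the lemma, after which one argues: if some $c_k\neq 0$ with $k\neq n$, then $d_\ell=0$ for all $\ell\neq 0$, so $\mathbb{E}_{\cM}(\lambda(b^n))=d_0\in\bC$, and applying $\tau$ forces $d_0=0$, contradicting $\mathbb{E}_{\cM}(\lambda(b^n))\neq 0$ (nonzero because it is a conjugate of $\mathbb{E}_{\cM}(\lambda(g))$, and we may assume the latter is nonzero). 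Your version is symmetric and can be made to work, but your stated conclusion ``$x_k x_m=0$ whenever $k\neq m$ or $k\neq n$'' is not what the lemma yields and would need to be replaced by the correct dichotomy; also the stray $b^{mn}$ in your displayed product should be $b^m$.
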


Let $\mathcal{M}$ be a $\Gamma$-invariant von Neumann subalgebra of $L(\G)$. Let $\mathbb{E}_{\mathcal{M}}$ be the $\Gamma$-equivariant conditional expectation onto $\mathcal{M}$. We let \[\Lambda=\{g\in \G:\mathbb{E}_{\mathcal{M}}(\lambda(g))\ne 0\}.\]
We show that $\mathbb{E}_{\mathcal{M}}(\lambda(g))=a_g\lambda(g)$ for some $a_g\in\mathbb{C}$ for every $g\in \G$. Note that this entails $\Lambda$ to be a subgroup of $\Gamma$. Moreover, we obtain that either $a_g=0$ or $1$ by applying $E_{\mathcal{M}}$ on both sides again. 
We now show that this implies that $\mathcal{M}=L(\Lambda)$. Clearly, $L(\Lambda)\subset\M$. For the other inclusion, observe that if $x\in \mathcal{M}$ and $s\in \text{supp}(x)$, then $0\neq \tau(x\lambda(s)^*)=\tau(\E_{\mathcal{M}}(x)\lambda(s)^*)=\tau(x\E_{\mathcal{M}}(\lambda(s)^*))=\tau(x\E_{\mathcal{M}}(\lambda(s))^*)$. Hence, $\E_{\mathcal{M}}(\lambda(s))\neq 0$, i.e., $s\in \Lambda$. Thus, $\mathcal{M}\subseteq L(\Lambda)$.

We would like to remark that such problems have been tackled in the past by showing that the relative commutant $\mathcal{M}'\cap L(\Gamma)=\mathbb{C}$, from where it follows easily that $\mathbb{E}_{\mathcal{M}}(\lambda(g))\lambda(g)^*\in\mathbb{C}$ (see e.g.,~\cite{chifan2020rigidity}). However, as pointed out in the introduction, our approach is to view $\{\mathbb{E}_{\mathcal{M}}(\lambda(g)):~g\in \Gamma\}$ as unknowns and find enough equations to completely determine them. 
\begin{proof}[Proof of \thref{abstractcond}]

We pick an element $g\ne e$ from $\Lambda$. Note that if no such $g$ exists, then $\mathcal{M}=\mathbb{C}$. Using Condition~(\ref{condition-1}), we may write $g=h^n$ for some primitive $h\in\G$ and $0\neq n\in\mathbb{Z}$. Moreover, we may find some $s\in \Gamma$ such that $h$ is free from $shs^{-1}$, i.e., $\langle h, shs^{-1}\rangle\cong\langle h\rangle *\langle shs^{-1}\rangle\cong \mathbb{F}_2$. 

For ease of notations, we write $a=h$ and $b=shs^{-1}$. Then, $a^n=g=h^n$ and $b^n=sh^ns^{-1}=sgs^{-1}$. 

Since $$\lambda(a^k)\mathbb{E}_{\mathcal{M}}(\lambda(a^n))\lambda(a^{-k})=\mathbb{E}_{\mathcal{M}}(\lambda(a^n)),~\forall k\in \mathbb{Z},$$ it follows that $\mathbb{E}_{\mathcal{M}}(\lambda(a^n))\in L\left(\langle a\rangle\right)'\cap L(\Gamma)$. Combining this observation with Condition~(\ref{condition-2}), we see that $\mathbb{E}_{\mathcal{M}}(\lambda(a^n))\in L\left(\langle a\rangle\right)$. As such, we can write 
\[\mathbb{E}_{\mathcal{M}}(\lambda(a^n))=\sum_{k\in\mathbb{Z}}c_k\lambda(a^k),~c_k\in\mathbb{C}.\]
Similarly, we can write 
\[\mathbb{E}_{\mathcal{M}}(\lambda(b^n))=\sum_{\ell\in\mathbb{Z}}d_{\ell}\lambda(b^{\ell}),~d_{\ell}\in\mathbb{C}.\]
Using the fact that $$\mathbb{E}_{\mathcal{M}}\left(\lambda(a^n)\mathbb{E}_{\mathcal{M}}(\lambda(b^n))\right)=\mathbb{E}_{\mathcal{M}}\left(\lambda(a^n)\right)\mathbb{E}_{\mathcal{M}}\left(\lambda(b^n))\right),$$
we obtain that
\begin{align*}
\sum_{k,\ell\in\mathbb{Z}}c_kd_{\ell}\lambda(a^kb^{\ell})
=\mathbb{E}_{\mathcal{M}}(\lambda(a^n)\sum_{\ell'\in\mathbb{Z}}d_{\ell'}\lambda(b^{\ell'}))
=\sum_{\ell'\in\mathbb{Z}}d_{\ell'}\mathbb{E}_{\mathcal{M}}(\lambda(a^nb^{\ell'})).
\end{align*}
So this implies that if $c_kd_{\ell}\neq 0$, then
\begin{align*}
a^kb^{\ell}&\in  \cup_{\ell'\in\bZ}\text{supp}(\mathbb{E}_{\mathcal{M}}(\lambda(a^nb^{\ell'})))\\
&\subseteq \cup_{j\in \bZ}\{h^j:~h~\mbox{is primitive and}~\exists \ell'\in \mathbb{Z}~\mbox{and}~i\neq 0,~\mbox{s.t.}~a^nb^{\ell'}=h^i\}.
\end{align*}
Note that in order to get the last inclusion, we have used the fact that if $h$ is primitive with $a^nb^{\ell'}=h^i$ for some $i\neq 0$, then $\text{supp}(\mathbb{E}_{\mathcal{M}}(\lambda(a^nb^{\ell'})))\subseteq \{h^j:~j\in\mathbb{Z}\}$ by condition~$(2)$.

\textit{Claim:} for any $k\neq n$ and $\ell\neq 0$, we have $c_kd_{\ell}=0$.

Suppose otherwise. Then, $a^kb^{\ell}=h^j$ for some primitive $h$ with $a^nb^{\ell'}=h^i$. Clearly, $j\neq 0$. Therefore, $(a^kb^{\ell})^i=(a^nb^{\ell'})^j$. Applying Lemma \ref{same_power}, we deduce that $k=n$. This is a contradiction.

If there exists $k\in\mathbb{Z}\setminus \{n\}$ such that $c_k\ne 0$, then $d_{\ell}=0$ for all $\ell\ne 0$. This will force $\mathbb{E}_{\mathcal{M}}(\lambda(b^n))=d_0$. We can then apply the canonical trace $\tau$ on both sides and use the fact that $\mathbb{E}_{\mathcal{M}}$ is $\tau$-invariant to conclude that $d_0=0$. This will contradict the assumption of $\mathbb{E}_{\mathcal{M}}(\lambda(b^n))$ being non-zero. As a result, we obtain that $c_k=0$ for all $k\ne n\in \mathbb{Z}$. This precisely tells us that 
$\mathbb{E}_{\mathcal{M}}(\lambda(a^n))=c_n\lambda(a^n)$ and we are done. 
\end{proof}
We now briefly recall all the necessary properties of groups which we shall put to use later.

We begin with the definition of property-naive. 
\begin{definition}[property-naive]
\thlabel{propertynaive}
A discrete group $\G$ is said to have property-naive (denoted by $P_{\text{nai}}$) if for any finite subset
$F$ of $\G\setminus\{1\}$ there exists an element $g_0\in\G$ of infinite order such that for each
$s \in F$, the subgroup $\langle s, g_0\rangle$ of $\G$, generated by $s$ and $g_0$, is canonically isomorphic
to the free product $\langle s\rangle *\langle g_0\rangle$.
\end{definition}
Property $P_{\text{nai}}$ has been exploited in the past to show that hyperbolic groups with trivial amenable radical are $C^*$-simple (see e.g.,~\cites{bekka1994some, arzhantseva2006relatively, AbbDah2019} etc). In particular, torsion free non-amenable hyperbolic groups, or more generally any non-amenable acylindrically hyperbolic groups with no non-trivial finite normal subgroups, possess $P_{\text{nai}}$.

We also recall the notion of ``unique root" property for later use.
\begin{definition}[Unique root property]
\thlabel{uniqueroot}
A group $\Gamma$ is said to have unique root property if for any $s,t\in\Gamma$ and for any positive integer $n$, the equality $s^n=t^n$ implies that $s=t$.
\end{definition}
It is well known that in torsion free word-hyperbolic groups, nontrivial elements have cyclic centralizers, see e.g. \cite[Page 462-463]{BriHae1999}. Moreover, if $\Gamma$ is a torsion free group with the property that every nontrivial element has cyclic centralizers, then it satisfies the ``unique root" property (see e.g., \cite[Lemma~2.2]{bartholdi2010abstract}).
We now show that these groups satisfy the conditions of \thref{abstractcond}.
\begin{prop}
\thlabel{conditionsforabstractcond}
Let $\Gamma$ be a torsion-free group with the property that every nontrivial element has cyclic centralizer. Also, assume that $\Gamma$ has property $P_{\text{nai}}$. Then, $\Gamma$ satisfies the following conditions:
\begin{enumerate}
    \item for any nontrivial $g \in \Gamma$, there is a primitive element $h\in \Gamma$ and nonzero integer $n$ such that  $g=h^n$. Moreover, there is some $s$ in $\G$ such that $h$ and $shs^{-1}$ are free, i.e., they generate a copy $\mathbb{F}_2$ in $\Gamma$.

\item for any nontrivial primitive element $h$ in $\G$, $L(\langle h\rangle)$ is a masa in $L(\G)$, i.e., 
\[L(\langle h\rangle) '\cap L(\G)=L(\langle h\rangle). \]
\end{enumerate}
\begin{proof}
The existence of $s$ in Condition~\ref{condition-1} is a consequence of property $P_{\text{nai}}$. We now proceed to verify the existence of primitive $h$ with $g=h^n$. Since $C(g)$ is cyclic, we may write $C(g)=\langle h\rangle$ for some $h\in \Gamma$. Since $g\in C(g)$, we may assume $g=h^n$. Now, we argue that $C(h)=\langle h\rangle$. Indeed, $\supseteq$ is clear. For the converse, note that $C(h)\subseteq C(g)=\langle h\rangle$.

We now prove that Condition~\ref{condition-2} is a consequence of the unique root property. Towards this end, let $h\in \Gamma$ be a primitive element. It is enough to show that for any  $s\in \Gamma\setminus \langle h\rangle$, $\sharp\{h^{-i}sh^i:~i\in\mathbb{Z}\}=\infty$.
Note that this implicitly implies that $h$ has infinite order. 
Let $C(s)$ denote the centralizer of $s$ in $\Gamma$. Equivalently, we need to show that for any $s\not\in \langle h\rangle$, we have $\langle h\rangle \cap C(s)=\{e\}$. Towards a contradiction, suppose otherwise. Then, $h^i\in \langle h\rangle \cap C(s)$ for some $i\neq 0$. This implies that $sh^i=h^is$. Replacing $i$ by $-i$ if required, we may assume that $i>0$. Now, this shows that $(s^{-1}hs)^i=h^i$ for some $i\in\mathbb{N}$. Since $\Gamma$ has unique root property, it follows that $s^{-1}hs=h$. Hence, $s\in C(h)=\langle h\rangle$ which is a contradiction. Therefore, the claim follows.
\end{proof}
\end{prop}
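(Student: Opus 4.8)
The plan is to verify Conditions~(1) and (2) of \thref{abstractcond} separately, in each case reducing the claim to facts already recorded in the excerpt: that a torsion-free group in which every nontrivial element has cyclic centralizer enjoys the unique root property (cited from \cite{bartholdi2010abstract}), and that property $P_{\text{nai}}$ supplies infinite-order elements generating free products.

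For Condition~(1), I would first produce the primitive root. Fix a nontrivial $g\in\Gamma$. Since $C(g)$ is cyclic, write $C(g)=\langle h\rangle$; as $g\in C(g)$ we get $g=h^n$, with $n\ne 0$ because $g\ne e$. To see $h$ is primitive, note $\langle h\rangle\subseteq C(h)$ trivially, while any $x\in C(h)$ commutes with $h^n=g$, hence $x\in C(g)=\langle h\rangle$; thus $C(h)=\langle h\rangle$. (Here $h\ne e$, since otherwise $C(g)=\{e\}\not\ni g$, so $h$ has infinite order because $\Gamma$ is torsion-free.) For the second half, apply $P_{\text{nai}}$ to the finite set $F=\{h\}$ to obtain an infinite-order $g_0$ with $\langle h,g_0\rangle\cong\langle h\rangle*\langle g_0\rangle\cong\mathbb{F}_2$. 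Inside a rank-two free group $\langle a,b\rangle$ the elements $a$ and $bab^{-1}$ freely generate a rank-two free subgroup, so taking $s=g_0$ makes $h$ and $shs^{-1}$ free, as required.

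For Condition~(2), I would use the standard description of the relative commutant of a group subalgebra. Put $\Lambda=\langle h\rangle$, which is abelian, so $L(\Lambda)\subseteq L(\Lambda)'\cap L(\Gamma)$. An element $x=\sum_{s}x_s\lambda(s)$ lies in $L(\Lambda)'\cap L(\Gamma)$ iff $\lambda(h^i)x\lambda(h^i)^{*}=x$ for all $i$, i.e.\ the Fourier coefficients are constant along $\Lambda$-conjugacy orbits; since $\sum_s|x_s|^2<\infty$, this forces $x_s=0$ whenever the orbit $\{h^{-i}sh^i:i\in\mathbb{Z}\}$ is infinite. Hence it suffices to show every $s\in\Gamma\setminus\langle h\rangle$ has infinite $\langle h\rangle$-conjugacy orbit, equivalently $\langle h\rangle\cap C(s)=\{e\}$. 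If this fails, then $h^i\in C(s)$ for some $i\ne 0$, and replacing $i$ by $-i$ if needed we may take $i>0$; then $sh^is^{-1}=h^i$ gives $(s^{-1}hs)^i=h^i$, so the unique root property yields $s^{-1}hs=h$, i.e.\ $s\in C(h)=\langle h\rangle$, a contradiction. Combined with Condition~(1) and \thref{abstractcond}, this finishes the proof.

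I do not expect a real obstacle: the substance of the proposition is the bookkeeping that repackages $P_{\text{nai}}$ and the unique root property as precisely the two hypotheses of \thref{abstractcond}. The two points needing care are the relative commutant computation in Condition~(2) --- in particular the equivalence between \emph{finite $\langle h\rangle$-conjugacy orbit} and \emph{$\langle h\rangle\cap C(s)\ne\{e\}$}, which is exactly what lets the unique root property intervene --- and checking that the free-product conclusion of $P_{\text{nai}}$ genuinely supplies a conjugating element, for which one invokes the elementary fact that $a$ and $bab^{-1}$ are free in $\mathbb{F}_2$.
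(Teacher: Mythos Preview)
Your proof is correct and follows essentially the same approach as the paper's: the primitive root is extracted from the cyclic centralizer $C(g)=\langle h\rangle$, and Condition~(2) is reduced via the standard relative-commutant/Fourier-coefficient argument to the statement $\langle h\rangle\cap C(s)=\{e\}$, which is then handled by the unique root property exactly as you do. The only difference is that you spell out the passage from $P_{\text{nai}}$ to a conjugating element $s$ (via the elementary fact that $a$ and $bab^{-1}$ are free in $\mathbb{F}_2$), which the paper leaves implicit.
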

\begin{remark}\thlabel{remark_Dixmier's work}
The way we prove Condition 2 has its root in Dixmier's work \cite{Dixmier}, where he found some sufficient conditions for a maximal abelian subgroup to generates a masa in the ambient group von Neumann algebra.
\end{remark}
Recall that given a subgroup inclusion $A\subset \Gamma$, the virtual centralizer of $A$ in $\Gamma$ denoted by $vC_{\Gamma}(A)$ is the subgroup of all elements $g\in \Gamma$ whose $A$-orbit under conjugation is finite. We observe that the proof of \thref{conditionsforabstractcond} shows the following.

\begin{lem}
\thlabel{control_support_in_relative_commutant}
Let $\Gamma$ be a countable discrete group and let $A\subseteq \Gamma$ be a subgroup. Then, the relative commutant  satisfies $L(A)'\cap L(\Gamma)\subseteq L(vC_{\Gamma}(A))$. In particular, for any infinite order element $g\in \Gamma$, if $x\in L(\langle  g\rangle)'\cap L(\Gamma)$, then $supp(x)\subseteq \cup_{i\geq 1}C(g^i)$.
\end{lem}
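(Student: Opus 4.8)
The plan is to prove \thref{control_support_in_relative_commutant} by a direct Fourier-coefficient argument, mimicking the mechanism already used in the proof of \thref{conditionsforabstractcond}. Let $x = \sum_{g \in \Gamma} x_g \lambda(g) \in L(A)' \cap L(\Gamma)$. For any $a \in A$ we have $\lambda(a) x \lambda(a)^{-1} = x$; comparing Fourier coefficients gives $x_{a g a^{-1}} = x_g$ for every $g \in \Gamma$ and every $a \in A$. Hence the function $g \mapsto x_g$ is constant on each $A$-conjugacy orbit in $\Gamma$. Since $\sum_g |x_g|^2 = \|x\|_2^2 < \infty$, the support of $x$ meets each infinite $A$-orbit in a set of vanishing $\ell^2$-mass, forcing $x_g = 0$ whenever the $A$-conjugacy orbit of $g$ is infinite. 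Therefore $\supp(x) \subseteq vC_\Gamma(A) = \{ g \in \Gamma : \sharp\{ a g a^{-1} : a \in A\} < \infty \}$, which gives $x \in L(vC_\Gamma(A))$ (using that $vC_\Gamma(A)$ is a subgroup and $L(vC_\Gamma(A))$ is weakly closed, so the $\ell^2$-convergent Fourier sum lands in it).

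For the ``in particular'' clause, apply the first part with $A = \langle g \rangle$ where $g$ has infinite order. Then $x \in L(\langle g \rangle)' \cap L(\Gamma) \subseteq L(vC_\Gamma(\langle g \rangle))$, so $\supp(x) \subseteq vC_\Gamma(\langle g\rangle)$. It remains to identify $vC_\Gamma(\langle g \rangle)$ with $\bigcup_{i \geq 1} C(g^i)$. First I would check $\supseteq$: if $t \in C(g^i)$ for some $i \geq 1$, then $g^i$ commutes with $t$, so the $\langle g\rangle$-orbit of $t$ under conjugation is contained in $\{ g^r t g^{-r} : 0 \le r < i\}$ (since conjugation by $g^i$ fixes $t$), hence finite, so $t \in vC_\Gamma(\langle g\rangle)$. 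For $\subseteq$: if $t \in vC_\Gamma(\langle g \rangle)$, the map $i \mapsto g^i t g^{-i}$ from $\bZ$ to $\Gamma$ has finite image, so by pigeonhole there exist $i < j$ with $g^i t g^{-i} = g^j t g^{-j}$, i.e. $g^{j-i} t = t g^{j-i}$, so $t \in C(g^{j-i})$ with $j - i \geq 1$. This establishes the equality.

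The only subtle point — and the one I would state carefully rather than gloss over — is the passage from ``$x_g = 0$ for all $g$ outside a subgroup $\Delta$'' to ``$x \in L(\Delta)$''. The Fourier series $\sum_{g \in \Delta} x_g \lambda(g)$ converges in $\|\cdot\|_2$ to $x$ and each partial sum lies in $L(\Delta)$; since $L(\Delta)$ is a von Neumann subalgebra it is closed in the strong (hence in particular $\|\cdot\|_2$) topology on bounded nets, but here one should instead invoke the standard fact that $L(\Delta) = \{ y \in L(\Gamma) : \supp(y) \subseteq \Delta \}$, which is immediate from the definition of $\supp$ and the conditional expectation $\E_{L(\Delta)}$ onto $L(\Delta)$ (namely $\E_{L(\Delta)}(x) = \sum_{g \in \Delta} x_g \lambda(g)$, and $x = \E_{L(\Delta)}(x)$ once $x_g = 0$ off $\Delta$). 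I expect no genuine obstacle here; the proof is essentially a repackaging of computations already carried out in the excerpt, the main task being to state the orbit-counting argument cleanly and to record the elementary identification of the virtual centralizer of a cyclic group.
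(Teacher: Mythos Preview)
Your proof is correct and follows essentially the same approach as the paper's own proof: the Fourier-coefficient invariance $x_g = x_{aga^{-1}}$ combined with $\ell^2$-summability, then specializing to $A = \langle g\rangle$. You supply more detail than the paper does on the identification $vC_\Gamma(\langle g\rangle) = \bigcup_{i\geq 1} C(g^i)$ and on the passage from support containment to membership in $L(\Delta)$, both of which the paper leaves as observations.
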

\begin{proof}
Let $x\in L(A)'\cap L(\Gamma)$ and write $x=\sum_{g\in \Gamma}x_g \lambda(g)$ for its Fourier 
expansion. Then we know that $x_g=x_{aga^{-1}}$ for all $a\in A$. 
Since $||x||^2_2=\sum_{g\in \Gamma}|x_g|^2<\infty$, we deduce that $x_g=0$ for all $g\not\in vC_{\Gamma}(A)$, i.e., $\text{supp}(x)\subseteq vC_{\Gamma}(A)$, hence $L(A)'\cap L(\Gamma)\subseteq L(vC_{\Gamma}(A))$. 

The last part follows by taking $A=\langle g \rangle$ and observing that $vC_{\Gamma}(A)=\cup_{i\geq 1}C(g^i)$.
\end{proof}
\begin{lemma}
\thlabel{malnormal}
Let $\Gamma$ be a discrete group. Suppose that $B\subset \Gamma$ is malnormal, i.e., $gBg^{-1}\cap B=\{e\}$ for all $g\in \Gamma\setminus B$. Then, for any infinite subgroup $A\subseteq B$, the relative commutant $L(A)'\cap L(\Gamma)\subseteq L(B)$.
\end{lemma}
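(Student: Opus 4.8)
The plan is to mimic the Fourier-support argument used in \thref{control_support_in_relative_commutant}, but now exploiting malnormality of $B$ to control cross terms. First I would fix $x \in L(A)' \cap L(\Gamma)$ and write its Fourier expansion $x = \sum_{g \in \Gamma} x_g \lambda(g)$. Since $x$ commutes with every $\lambda(a)$ for $a \in A$, comparing Fourier coefficients gives $x_g = x_{aga^{-1}}$ for all $a \in A$ and all $g \in \Gamma$; hence $x_g$ is constant on each $A$-conjugacy orbit in $\Gamma$. Because $\|x\|_2^2 = \sum_g |x_g|^2 < \infty$, any $g$ with $x_g \neq 0$ must have a \emph{finite} $A$-conjugacy orbit, i.e. $g \in vC_\Gamma(A)$, so $\supp(x) \subseteq vC_\Gamma(A)$ — this is exactly the conclusion of \thref{control_support_in_relative_commutant}.

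The remaining point is purely group-theoretic: I claim that $vC_\Gamma(A) \subseteq B$ whenever $A \subseteq B$ is infinite and $B$ is malnormal. Suppose $g \in \Gamma$ has finite $A$-orbit under conjugation. Then the stabilizer $\{a \in A : aga^{-1} = g\} = A \cap C_\Gamma(g)$ has finite index in $A$, hence is infinite (as $A$ is infinite); pick a nontrivial $a_0$ in it, so $a_0 \in A \subseteq B$ and $a_0$ commutes with $g$. If $g \notin B$, then $gBg^{-1} \cap B = \{e\}$; but $a_0 = g a_0 g^{-1} \in gBg^{-1} \cap B$ and $a_0 \neq e$, a contradiction. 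Therefore $g \in B$, proving $vC_\Gamma(A) \subseteq B$. Combining, $\supp(x) \subseteq vC_\Gamma(A) \subseteq B$, which gives $x \in L(B)$ and hence $L(A)' \cap L(\Gamma) \subseteq L(B)$.

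I don't anticipate a serious obstacle here: the von Neumann algebra input is the already-established support lemma, and the group-theoretic step is a short pigeonhole argument combined with the definition of malnormality. The only thing to be a little careful about is the justification that an element of $L(B)$ supported on $B$ genuinely lies in $L(B)$ — this is standard, since for $x \in L(\Gamma)$ with $\supp(x) \subseteq B$ one has $x = \sum_{g \in B} x_g \lambda(g) \in L(B)$ with convergence in $\|\cdot\|_2$, using that $L(B) \subseteq L(\Gamma)$ is the $\ell^2$-closure of $\operatorname{Span}\{\lambda(g) : g \in B\}$. One should also note the degenerate possibility $B = \Gamma$, in which case the statement is trivial and malnormality is vacuous.
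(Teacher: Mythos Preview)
Your proof is correct and follows essentially the same approach as the paper: the paper's proof consists of the single observation that malnormality forces $vC_{\Gamma}(A)\subseteq B$ and then invokes \thref{control_support_in_relative_commutant}, which is exactly what you do, only with both steps written out in full detail rather than cited.
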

\begin{proof}
Note that malnormality condition implies $vC_{\Gamma}(A)\subseteq B$, then we can apply \thref{control_support_in_relative_commutant} to finish the proof.
\end{proof}

For a countable discrete group $\Gamma$, its 
 first $L^2$-Betti number,  written as $\beta_1^{(2)}(\Gamma)$ is defined to be a certain dimension of either $H_1(\Gamma,\ell^2(\Gamma))$ or $H^1(\Gamma,\ell^2(\Gamma))$, see e.g., \cites{lueck, PetersonThom2011}.
We will not be needing the above definition as it is. For our purposes, it suffices to use a certain property called ``Property~$(*)$" and a striking theorem on groups having positive first $L^2$-Betting number~\cite{PetersonThom2011}. We first recall the definition of property~$(*)$.
\begin{definition}[Property~$(*)$]
\thlabel{prop*}
Let $\Gamma$ be a countable discrete group. We say that $\Gamma$ has property~$(*)$ if every non-trivial element of $\bZ \Gamma$ acts without kernel on $\ell^2\Gamma$.
\end{definition}
\begin{thm}[Theorem 4.1 in \cite{PetersonThom2011}]
Let $\Gamma$ be a torsion free countable discrete group. There exists a
family of subgroups $\{\Gamma_i:~i\in I\}$, such that
\begin{enumerate}
\item[(i)] We can write $\Gamma$ as the disjoint union:
\[\Gamma=\{e\}\cup \bigcup_{i\in I}\dot{\Gamma_i},\]
where, $\dot{\G_i}=\G_i\setminus \{e\}$.
\item[(ii)] The groups $\G_i$ are mal-normal in $\Gamma$, for $i\in I$.
\item[(iii)] If $\G$ satisfies condition~$(*)$, then $\Gamma_i$ is free from $\G_j$, for $i\neq j$.
\item[(iv)] $\beta_1^{(2)}(\G_i)=0$, for all $i\in I$.
\end{enumerate}
\end{thm}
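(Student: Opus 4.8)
This is Theorem~4.1 of \cite{PetersonThom2011}, so my plan is to recall the strategy behind it. The idea is to build, for each nontrivial element, a canonical largest subgroup through it with vanishing first $L^2$-Betti number. Concretely, for $g,h\in\Gamma\setminus\{e\}$ I would declare $g\sim h$ if $g$ and $h$ lie in a common subgroup $H\le\Gamma$ with $\beta_1^{(2)}(H)=0$, and take the $\Gamma_i$ to be the subgroups generated by the $\sim$-classes.

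First I would verify $\sim$ is an equivalence relation. Reflexivity uses torsion-freeness: $g$ has infinite order, so $g\in\langle g\rangle\cong\mathbb{Z}$ and $\beta_1^{(2)}(\mathbb{Z})=0$; symmetry is clear. For transitivity, if $g,h\in H$ and $h,k\in K$ with $\beta_1^{(2)}(H)=\beta_1^{(2)}(K)=0$, then $H\cap K\supseteq\langle h\rangle$ is infinite, so $\langle H,K\rangle$ is a quotient of $H*_{H\cap K}K$; the Mayer--Vietoris bound $\beta_1^{(2)}(H*_{A}K)\le\beta_1^{(2)}(H)+\beta_1^{(2)}(K)+\beta_0^{(2)}(A)$ with $\beta_0^{(2)}(A)=0$ (as $A$ is infinite), combined with the fact that the kernel of $H*_{H\cap K}K\twoheadrightarrow\langle H,K\rangle$ is torsion-free, hence trivial or infinite, so that passing to this quotient does not raise $\beta_1^{(2)}$, gives $\beta_1^{(2)}(\langle H,K\rangle)=0$; since $g,k\in\langle H,K\rangle$ we get $g\sim k$. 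The same ``directed union'' idea shows that each $\sim$-class $C$ is, together with $e$, a subgroup $\Gamma_C$ (if $g,h\in C$ their common $\beta_1^{(2)}=0$ subgroup also contains $gh^{-1}$), realized as an increasing union of $\beta_1^{(2)}=0$ subgroups, so that $\beta_1^{(2)}(\Gamma_C)=0$ by continuity of $\beta_1^{(2)}$ along directed unions. This yields (i) and (iv), and also shows each $\Gamma_C$ is \emph{saturated}: any $\beta_1^{(2)}=0$ subgroup of $\Gamma$ meeting $C$ nontrivially lies inside $\Gamma_C$.

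For malnormality~(ii), suppose $g\notin\Gamma_C$ yet $gxg^{-1}\in\Gamma_C\setminus\{e\}$ for some $x\in\Gamma_C$. Then $g\Gamma_Cg^{-1}$ is a $\beta_1^{(2)}=0$ subgroup meeting $C$, so $g\Gamma_Cg^{-1}\subseteq\Gamma_C$ by saturation; running the same argument with $g^{-1}$ gives $g\Gamma_Cg^{-1}=\Gamma_C$, so $P:=\langle\Gamma_C,g\rangle$ contains $\Gamma_C$ as a normal subgroup with $P/\Gamma_C$ cyclic. If $[P:\Gamma_C]<\infty$ then $\beta_1^{(2)}(P)=0$ by finite-index multiplicativity; if $[P:\Gamma_C]=\infty$ then $P=\Gamma_C\rtimes\mathbb{Z}$ and the Wang exact sequence in $L^2$-homology, using $\beta_1^{(2)}(\Gamma_C)=0$ and $\beta_0^{(2)}(\Gamma_C)=0$ ($\Gamma_C$ being infinite), again forces $\beta_1^{(2)}(P)=0$. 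Either way $P$ is a $\beta_1^{(2)}=0$ subgroup meeting $C$, so $P\subseteq\Gamma_C$ by saturation, i.e. $g\in\Gamma_C$ --- a contradiction.

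Finally, for (iii) assume $\Gamma$ has property~$(*)$ and let $i\ne j$. Then $\Gamma_i\cap\Gamma_j=\{e\}$ (disjoint classes) and $\Gamma_i,\Gamma_j$ are malnormal by (ii). If the natural map $\Gamma_i*\Gamma_j\to\langle\Gamma_i,\Gamma_j\rangle$ were not injective, I would pick a reduced relation $x_1y_1\cdots x_ky_k=e$ with $x_\ell\in\Gamma_i\setminus\{e\}$, $y_\ell\in\Gamma_j\setminus\{e\}$, of minimal length (up to relabelling and cyclic conjugation); the case $k=1$ contradicts $\Gamma_i\cap\Gamma_j=\{e\}$ at once, while for $k\ge2$ one uses malnormality to put the relation in a canonical minimal form and then manufactures from it a nonzero element of $\mathbb{Z}\Gamma$ together with a nonzero vector in $\ell^2\Gamma$ that it annihilates, contradicting $(*)$. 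I expect this last step --- extracting the kernel vector from a minimal relation between two malnormal, trivially-intersecting subgroups --- to be the main obstacle and the genuine point where $(*)$ is used; note that $(*)$ together with trivial intersection alone does \emph{not} force a free product (the coordinate axes in $\mathbb{Z}^2$ are a counterexample), so the malnormality and saturation really have to be fed into the argument.
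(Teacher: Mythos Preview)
The paper does not give a proof of this statement at all: it is quoted verbatim as Theorem~4.1 of \cite{PetersonThom2011} and used as a black box. So there is no ``paper's proof'' to compare your sketch against; the relevant comparison is with the original Peterson--Thom argument.

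Your overall architecture --- defining $g\sim h$ via common membership in a $\beta_1^{(2)}=0$ subgroup, taking the $\Gamma_i$ to be the resulting saturated classes, and deducing (i), (ii), (iv) from saturation --- is indeed the Peterson--Thom strategy. However, your transitivity step contains a genuine error. You argue that $\langle H,K\rangle$ is a quotient of $H*_{H\cap K}K$ by a torsion-free (hence trivial or infinite) normal subgroup, and then assert that ``passing to this quotient does not raise $\beta_1^{(2)}$.'' This last assertion is false in general: take $G=F_2\times\mathbb{Z}$, which has $\beta_1^{(2)}(G)=0$ since it has infinite center, and quotient by the central $\mathbb{Z}$ to get $F_2$ with $\beta_1^{(2)}(F_2)=1$. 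So $\beta_1^{(2)}$ can strictly increase under quotients by infinite normal subgroups, and your route from $\beta_1^{(2)}(H*_{H\cap K}K)=0$ to $\beta_1^{(2)}(\langle H,K\rangle)=0$ collapses. Peterson--Thom do \emph{not} go through amalgamated free products here; their transitivity relies on a direct cohomological argument (using the ring of affiliated operators and the vanishing of certain $\mathrm{Tor}$ groups) showing that if $A,B\le\Gamma$ have $\beta_1^{(2)}(A)=\beta_1^{(2)}(B)=0$ and $A\cap B$ is infinite, then $\beta_1^{(2)}(\langle A,B\rangle)=0$. That lemma is the real engine behind (i), (ii), (iv), and it cannot be replaced by the quotient argument you propose.

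For (iii) you correctly flag that the construction of a nonzero element of $\mathbb{Z}\Gamma$ with nontrivial $\ell^2$-kernel from a minimal relation is the crux, and that malnormality (not just trivial intersection) must enter; but as written this is only a statement of intent, not a proof.
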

As mentioned in \cite[Page 574-575]{PetersonThom2011}, it is known that all right orderable groups and all residually torsion free
elementary amenable groups satisfy this condition (with no known counterexamples).

\section{A necessary condition for groups with ISR property}\label{sec: a necessary condition}

In this section, we make a simple observation on necessary conditions for groups with ISR property.

\begin{prop}
\thlabel{necessary_condition_for_ISR}
Let $\Gamma$ be a countable discrete group with the ISR property. Then the finite conjugacy radical of $\Gamma$ has at most two elements. If $\Gamma$ is further assumed to be infinite, then $\Gamma$ is i.c.c.
\end{prop}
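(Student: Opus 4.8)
The plan is to show directly that if the finite conjugacy radical (the FC-center) $\mathrm{Rad}_{fc}(\Gamma)$ of $\Gamma$ has at least three elements, then $\Gamma$ fails the ISR property by exhibiting a $\Gamma$-invariant von Neumann subalgebra of $L(\Gamma)$ that is not of the form $L(\Lambda)$ for any normal subgroup $\Lambda\lhd\Gamma$. Write $N=\mathrm{Rad}_{fc}(\Gamma)$, the set of elements whose $\Gamma$-conjugacy class is finite; this is a normal (characteristic) subgroup of $\Gamma$. If $N$ has at least three elements, then $L(N)$ is a finite-dimensional or at least nontrivial von Neumann subalgebra on which the conjugation action of $\Gamma$ factors through a finite quotient on each element, so $L(N)$ is $\Gamma$-invariant and even finite-dimensional when $N$ is finite. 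The idea is that $L(N)$, being a finite von Neumann algebra with a trace, decomposes according to its center, and one can find a $\Gamma$-invariant von Neumann subalgebra strictly between $\mathbb{C}$ and $L(N)$ — for instance, if $N$ is abelian of order $\ge 3$, pick a proper nontrivial $\Gamma$-invariant unital subalgebra of the finite-dimensional abelian algebra $L(N)=\mathbb{C}^{|N|}$, which exists because the conjugation action of $\Gamma$ on $\widehat{N}$ (the dual group, equivalently the minimal projections of $L(N)$) is by permutations and any union of orbits that is neither all of $\widehat N$ nor reducible to the trivial character gives such a subalgebra; more carefully one uses that a finite abelian group of order $\ge 3$ admits a $\Gamma$-invariant subalgebra not of the form $L(\Lambda)$.

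Concretely, I would argue as follows. First, reduce to the case $N$ finite: if $N$ is infinite, the claimed conclusion is that $\Gamma$ is i.c.c., i.e. $N=\{e\}$, so it suffices to rule out $2<|N|$, finite or infinite, in one blow — but in fact when $N$ is infinite one still produces an invariant subalgebra inside $L(N)$ by the same mechanism (the center $Z(L(N))$ is $\Gamma$-invariant and, since $N$ is infinite FC, $Z(L(N))$ is diffuse or at least infinite-dimensional abelian, giving plenty of room). Second, in the finite abelian case $N\cong \mathbb{Z}/2\oplus\mathbb{Z}/2$ or $\mathbb{Z}/3$ or larger, examine the possible $\Gamma$-invariant subalgebras of $\mathbb{C}[N]$: these correspond bijectively to $\Gamma$-invariant subgroups of $\widehat N$ via $\Lambda\mapsto L(\Lambda)$ only when every invariant subalgebra is a group algebra, which fails precisely because $\mathbb{C}[N]$ has invariant subalgebras given by $\{f:\widehat N\to\mathbb{C}\ \text{constant on each}\ \Gamma\text{-orbit}\}$, i.e. $\ell^\infty(\widehat N/\Gamma)$, and when $|\widehat N/\Gamma|$ with the orbit structure does not match any subgroup quotient we get a genuine counterexample. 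The cleanest route: take $\mathcal{M}=\{x\in L(N): x \text{ has real Fourier coefficients supported on a $\Gamma$-invariant symmetric subset } S\}$ fails to be an algebra, so instead take $\mathcal{M}$ to be the von Neumann subalgebra generated by a single $\Gamma$-invariant self-adjoint element $h=\sum_{s\in C} \lambda(s)$ where $C$ is a finite conjugacy class with $C\neq C^{-1}$ or $|C|\ge 2$; then $\mathcal{M}=\{h\}''$ is $\Gamma$-invariant, and one checks its generating set (the spectral projections of $h$) need not consist of group-like elements, so $\mathcal{M}\ne L(\Lambda)$.

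For the second sentence of the proposition, once we know $|N|\le 2$, suppose $\Gamma$ is infinite and $N=\{e,z\}$ with $z\ne e$ central of order $2$ (an order-two FC-element with finite conjugacy class that is a singleton, hence central). Then consider the $\Gamma$-invariant subalgebra $\mathcal{M}$ generated by the projection $p=\tfrac{1}{2}(1+\lambda(z))$: this is $\Gamma$-invariant (since $z$ is central) and equals $L(\{e,z\})=L(N)$, which is $2$-dimensional; this is $L(\Lambda)$ for $\Lambda=N$, so no contradiction yet. The real point is subtler: with $N=\{e,z\}$ nontrivial, consider instead whether there is a $\Gamma$-invariant subalgebra that is $2$-dimensional but not a group algebra — but over $\{e,z\}$ the only proper subalgebras are $\mathbb{C}$ and possibly $\mathbb{C}p+\mathbb{C}(1-p)=L(N)$, both group algebras, so $|N|=2$ is actually consistent with ISR a priori and the sharpening to i.c.c. must use infiniteness differently. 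Here I would instead show: if $\Gamma$ is infinite and has ISR, then for the central element $z$ of order $2$, the subalgebra $\mathbb{C}1 + \mathbb{C}(\lambda(z)-\lambda(z')) $ type constructions, or more robustly, use that $L(N)\cong \mathbb{C}\oplus\mathbb{C}$ is a direct summand and that $\Gamma$-invariance forces any $\Gamma$-invariant subalgebra meeting both summands to be all of $L(N)=L(\Lambda)$ — and then derive a contradiction from the existence of an invariant subalgebra built from $L(N)$ together with a masa, using infiniteness to ensure properness. The main obstacle, and the step I expect to require the most care, is precisely this last point: ruling out $|N|=2$ in the infinite case needs an honest construction of an invariant subalgebra strictly between $L(N)$ and $L(\Gamma)$ that is not a group algebra — for this I would combine the order-$2$ central $z$ with a non-normal element $g$ of infinite order (available since $\Gamma$ is infinite and only has FC-center of size $2$, so $\Gamma/N$ is infinite i.c.c.-like) and take $\mathcal{M} = W^*\bigl(\lambda(z)\lambda(g) + \lambda(g)^{-1}\lambda(z)\bigr)$ or the von Neumann algebra generated by $\{\lambda(h)+\lambda(zh): h\in\Gamma\}$, checking $\Gamma$-invariance and that its support is not a subgroup; verifying that this is not of the form $L(\Lambda)$ is the crux and will use the explicit Fourier-support bookkeeping developed in Section~\ref{sec: preliminaries}.
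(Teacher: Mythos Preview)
Your proposal has the right overall strategy --- exhibit a $\Gamma$-invariant subalgebra that is not of the form $L(\Lambda)$ --- but it never actually completes a construction, and it misses a key simplification that makes the paper's proof go through cleanly.

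The main gap is that you work directly inside $L(N)$ for $N=\Gamma_{fin}$ while carrying along the conjugation action of $\Gamma$, and your candidate subalgebras (orbit algebras on $\widehat{N}$, or $\{h\}''$ for $h=\sum_{s\in C}\lambda(s)$) are never verified to fail the form $L(\Lambda)$. The paper sidesteps all of this by first proving, as a consequence of ISR itself, that $\Gamma_{fin}=C(\Gamma)$: apply ISR to the center $\mathcal{Z}(L(\Gamma))$ to get $\mathcal{Z}(L(\Gamma))=L(\Lambda)$ with $C(\Gamma)\subseteq\Lambda\subseteq\Gamma_{fin}$, then observe $\Lambda\subseteq C(\Gamma)$ since $\lambda(\Lambda)$ is central, and finally that any $g\in\Gamma_{fin}$ has $\sum_i\lambda(s_i^{-1}gs_i)\in\mathcal{Z}(L(\Gamma))=L(\Lambda)$, forcing $g\in\Lambda$. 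Once $N$ is central, \emph{every} subalgebra of $L(N)$ is automatically $\Gamma$-invariant, and the problem reduces to finding subalgebras of an abelian group algebra that are not group subalgebras. For $|N|=n\geq 3$ finite this is immediate: take $\mathbb{C}p\oplus\mathbb{C}(1-p)$ with $p$ a minimal projection of $L(N)\cong\mathbb{C}^n$; its minimal projections have traces $1/n$ and $(n-1)/n$, whereas any $L(\Lambda)$ with $|\Lambda|=2$ has minimal projections of equal trace $1/2$. For $N$ infinite abelian one uses an uncountability argument on the family $\mathbb{C}\chi_{[0,t]}\oplus\mathbb{C}\chi_{(t,1]}$ inside $L(N)\cong L^\infty[0,1]$.

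For the second assertion you correctly see that $L(N)$ itself gives no contradiction when $|N|=2$, and you grope toward a larger construction. In fact your last suggestion, the algebra generated by $\{\lambda(h)+\lambda(zh):h\in\Gamma\}$, is exactly right: since $\lambda(h)+\lambda(zh)=2p\lambda(h)$ with $p=\tfrac{1}{2}(1+\lambda(z))$ central, the unital von Neumann algebra it generates is $M=pL(\Gamma)\oplus\mathbb{C}(1-p)$. The paper uses precisely this $M$ and finishes by showing that if $M=L(H)$ then $H=\Gamma$ (from $p\lambda(g)\in M$ one reads off $g,zg\in H$ for every $g\notin C(\Gamma)$), whence $(1-p)L(\Gamma)=(1-p)\mathbb{C}$, which is absurd for infinite $\Gamma$. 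You had the object in hand but did not identify it or carry out the verification.
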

\begin{proof}
Let $\Gamma_{fin}$ be the finite conjugacy radical of $\Gamma$, i.e., it is the normal subgroup of $\Gamma$ consisting of all elements with finite conjugacy classes.

First, we show that  $C(\Gamma)=\Gamma_{fin}$ and $L(C(\Gamma))=\mathcal{Z}(L(\Gamma))=L(\Gamma_{fin})$, where $C(\Gamma)$ denotes the center of $\Gamma$.

Since $\mathcal{Z}(L(\Gamma))$ is $\Gamma$-invariant, we deduce that $\mathcal{Z}(L(\Gamma))=L(\Lambda)$ for some normal subgroup $\Lambda\lhd \Gamma$ from the ISR property.
Clearly, we have $L(C(\Gamma))\subseteq \mathcal{Z}(L(\Gamma))\subseteq L(\Gamma_{fin})$, thus, $C(\Gamma)\subseteq \Lambda\subseteq \Gamma_{fin}$. Moreover, since $\Lambda\subseteq L(\Lambda)=\mathcal{Z}(L(\Gamma))$, we deduce that $\Lambda\subseteq C(\Gamma)$. Thus, $C(\Gamma)=\Lambda$ and $\mathcal{Z}(L(\Gamma))=L(C(\Gamma))$. Next, for any $g\in \Gamma_{fin}$, we list the elements in its conjugacy class as $s_i^{-1}gs_i$, where $1\leq i\leq n$ for some $n\in \mathbb{N}$ with $s_1=e$. Then $x:=\sum_{i=1}^n\lambda(s_i^{-1}gs_i)\in\mathcal{Z}(L(\Gamma))=L(\Lambda)$, hence $g\in \Lambda$. Therefore, $\Gamma_{fin}=\Lambda=C(\Gamma)$. 

Second, we show that if $\Gamma$ is abelian, then $|C(\Gamma)|\leq 2$. 

If $\Gamma$ is finite, then $L(\Gamma)\cong \ell^{\infty}(\{1,2,\ldots, n\})\cong \mathbb{C}^n$ where $n=|\Gamma|$. If $n\geq 3$, then let $p=(1,0,\ldots, 0)$. We see that $N:=\bC p\oplus \bC (1-p)$ is a $\Gamma$-invariant von Neumann subalgebra. Observe that $p$ and $1-p$ are minimal projections in $N$ with nonequal trace. We argue that $N\neq L(\Lambda)$ for any subgroup $\Lambda$ in $\Gamma$. Indeed, any subgroup $\Lambda$ is still finite and abelian. Therefore, $L(\Lambda)\cong \ell^{\infty}(\{1,2,\ldots, d\})$ where $d=|\Lambda|$. But, all  minimal projections in $L(\Lambda)$ have equal trace $1/d$.

If $\Gamma$ is infinite, then $L(\Gamma)\cong L^{\infty}([0, 1],\mu)$, where $\mu$ is the Lebesgue measure on $[0, 1]$. Then clearly, $L(\Gamma)$ contains uncountably many two dimensional von Neumann subalgebras, e.g., $P_t=\mathbb{C}p_t\oplus \mathbb{C}(1-p_t)$, where $p_t=\chi_{[0, t]}$ is the characteristic function on $[0, t]$, for all $0\leq t<1/2$. Note that if $P_t=L(\Lambda_t)$ for some $\Lambda_t\leq \Gamma$, then $\Lambda_t\cong \mathbb{Z}/{2\mathbb{Z}}$. In particular, $\Lambda_t$ is generated by two elements. However, $\Gamma$ contains at most countably many two-generated subgroups. This shows $L(\Gamma)$ does not have ISR property, a contradiction.

Next, we claim that $|C(\Gamma)|\leq 2$ for a general group $\Gamma$ with the ISR property. To see this, observe that any von Neumann subalgebra of $L(C(\Gamma))$ is $\Gamma$-invariant, hence we may apply the second part of the proof above to deduce $|C(\Gamma)|\leq 2$. (Indeed, if $L(\Lambda)=N$, where $N$ is defined as above, then automatically, $\Lambda\subseteq N\subseteq L(C(\Gamma))$, hence $\Lambda\subseteq C(\Gamma)$.) Therefore, we can deduce that $|\Gamma_{fin}|=|C(\Gamma)|\leq 2$.

Finally, we further assume $\Gamma$ is infinite and show $\Gamma$ is i.c.c.

Towards a contradiction, assume not. Then $C(\Gamma)=\{e, s\}$, where $s$ is an element in $\Gamma$ with order two. Clearly $p=\frac{1+\lambda(s)}{2}$ is a central projection in $L(\Gamma)$. 

Set $M=pL(\Gamma)\oplus (1-p)\mathbb{C}$. Clearly, $M$ is a $\Gamma$-invariant von Neumann subalgebra of $L(\Gamma)$, hence $M=L(H)$ for some normal subgroup $H\lhd \Gamma$.

Claim: $H=\Gamma$.

Indeed, from $1-p\in M=L(H)$, we deduce that $C(\Gamma)\subseteq H$.
Take any $g\in \Gamma\setminus C(\Gamma)$, note that $\frac{\lambda(g)+\lambda(sg)}{2}=p\lambda(g)\in M=L(H)$. Since $g\neq sg$, we deduce that $g, sg\in H$. Hence, $H=\Gamma$ and $M=pL(\Gamma)\oplus (1-p)\mathbb{C}=L(\Gamma)$. After multiplying both sides by $1-p$, we deduce that $(1-p)\mathbb{C}=(1-p)L(\Gamma)$. Thus, for any $g\in \Gamma\setminus C(\Gamma)$, we deduce that $(1-p)\lambda(g)=(1-p)c$ for some $c\in \mathbb{C}$. Clearly, this leads to a contradiction.
\end{proof}
Now, it is natural to wonder whether $\Gamma$ satisfies the ISR property under the additional assumption of being i.c.c., where $\Gamma$ belongs to the class of groups considered in \cite[Corollary 3.17]{chifan2020rigidity}. We make the following remark.

\begin{remark}
\label{remark: Chifan-Das work}
The proofs of Theorem 3.16 and Corollary 3.17 in \cite{chifan2020rigidity} actually show stronger conclusion if we further assume that the ambient group $\Gamma$ is i.c.c. The reason is recorded in the proof of the following proposition.
\end{remark}
\begin{prop}
\thlabel{obsfromCD}
Let $\Gamma$ be a group as in \cite[Corollary~3.17]{chifan2020rigidity}. Further assume that $\Gamma$ is an i.c.c. group. Then, every $\Gamma$-invariant subfactor $N\le L(\Gamma)$ is of the form $L(\Lambda)$ for some normal subgroup $\Lambda\lhd \Gamma$.
\end{prop}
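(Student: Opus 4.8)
The plan is to leverage the structural results that Chifan--Das prove in \cite{chifan2020rigidity} and simply track what the i.c.c.\ hypothesis buys us at the very last step of their argument. Recall that the class of groups in \cite[Corollary~3.17]{chifan2020rigidity} consists of non-amenable groups $\Gamma$ that are either exact and acylindrically hyperbolic, or have positive first $L^2$-Betti number; for such $\Gamma$ their Theorem~3.16 / Corollary~3.17 produce, for each $\Gamma$-invariant subfactor $N\le L(\Gamma)$, a normal subgroup $\Lambda\lhd\Gamma$ together with a unitary $u\in L(\Gamma)$ such that $uL(\Lambda)u^*\subseteq N\subseteq $ (a finite-index extension of) $L(\Lambda)$ --- that is, $N$ is \emph{commensurable} (up to a unitary conjugacy) to $L(\Lambda)$, and in particular $[N:uL(\Lambda)u^*]<\infty$. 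So the deformation/rigidity input is entirely black-boxed; what remains is to upgrade ``commensurable up to conjugacy'' to ``equal on the nose'' under the extra assumption that $\Gamma$ is i.c.c.

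The first step is to dispose of the unitary $u$. Since $N$ is $\Gamma$-invariant and contains $uL(\Lambda)u^*$ with finite index, and since $L(\Lambda)$ is itself $\Gamma$-invariant (as $\Lambda\lhd\Gamma$), one can compare the two finite-index inclusions $uL(\Lambda)u^*\subseteq N$ and $L(\Lambda)\subseteq$ its finite-index extension inside $N$; the relevant point is that a $\Gamma$-invariant subfactor squeezed between $L(\Lambda)$ and a finite-index overalgebra of it, when $\Gamma$ is i.c.c., forces the intertwiner to be trivial modulo $\cM$ --- concretely, $u$ normalizes $L(\Lambda)$ up to the finite-index data, and the Dixmier-type / relative commutant arguments already available in the excerpt (the relative commutant $L(\Lambda)'\cap L(\Gamma)$ is controlled by the virtual centralizer $vC_\Gamma(\Lambda)$, which is central-by-finite and hence trivial when $\Gamma$ is i.c.c.) pin $u$ down to a scalar times a unitary in $N$. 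Once $u$ can be absorbed into $N$, we are reduced to: $L(\Lambda)\subseteq N\subseteq P$ with $[P:L(\Lambda)]<\infty$ and all three $\Gamma$-invariant, and we must show $N=L(\Lambda)$.

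The second step is then a direct computation with Fourier supports, in the spirit of \thref{abstractcond}: an element $x\in N$ with $\supp(x)\ni g$ has $g$ in the $\Gamma$-orbit (under conjugation) of finitely many elements because the finite-index condition $[P:L(\Lambda)]<\infty$ together with i.c.c.\ severely restricts how $\supp(x)$ can spread outside $\Lambda$; averaging $x$ over the (infinite) conjugacy class of such a $g$ forces the corresponding Fourier coefficient to vanish unless $g\in\Lambda$. This is exactly the mechanism used in \thref{necessary_condition_for_ISR} to show $\Gamma_{fin}=C(\Gamma)$ and in \thref{control_support_in_relative_commutant}: $\|x\|_2^2=\sum|x_g|^2<\infty$ is incompatible with a nonzero coefficient on an element of infinite conjugacy class when $x$ lies in a $\Gamma$-invariant algebra that is ``not much bigger than'' $L(\Lambda)$. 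Hence $\supp(N)\subseteq\Lambda$, giving $N\subseteq L(\Lambda)$ and therefore $N=L(\Lambda)$, and replacing $\Lambda$ by $u^{-1}(\cdot)u$-corrected data if needed (which by step one is unnecessary), the normal subgroup witnessing $N$ is honestly $\Lambda$.

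I expect the main obstacle to be step one: extracting from \cite[Theorem~3.16]{chifan2020rigidity} the precise form of the commensurability (is it genuinely $uL(\Lambda)u^*\subseteq N$ with finite index, or only an intertwining bimodule?) and then justifying rigorously that the i.c.c.\ hypothesis kills the unitary. This is the place where one has to read \cite{chifan2020rigidity} carefully rather than cite it as a black box, and where the i.c.c.\ assumption is genuinely doing the work --- without it, $L(\Gamma)$ can have a nontrivial center or finite conjugacy radical and the conjugating unitary need not be absorbable, which is precisely why the original statement in \cite{chifan2020rigidity} only claims commensurability. Once that reduction is in hand, step two is routine and essentially identical to the support-averaging arguments already carried out in Section~\ref{sec: a necessary condition}.
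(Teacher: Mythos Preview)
Your proposal has a genuine gap and also misreads what Chifan--Das actually output. The paper's own proof takes a completely different, and much shorter, route.

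First, on the shape of the Chifan--Das conclusion: in \cite[Theorem~3.16]{chifan2020rigidity} the inclusion goes $N\le L(\Lambda)\le N\vee (N'\cap L(\Lambda))$, not $uL(\Lambda)u^*\subseteq N$. There is no conjugating unitary to dispose of, so your Step~1 is aimed at a nonexistent target.

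Second, and more seriously, your Step~2 averaging argument does not work. You write that for $x\in N$ with $g\in\supp(x)$, ``averaging $x$ over the (infinite) conjugacy class of such a $g$ forces the corresponding Fourier coefficient to vanish unless $g\in\Lambda$.'' But the mechanism in \thref{control_support_in_relative_commutant} requires $x_g=x_{aga^{-1}}$ for all $a$ in some infinite set, which holds only when $x$ lies in a \emph{relative commutant}. A generic $x\in N$ has no such symmetry: the $\Gamma$-invariance of $N$ is invariance of the algebra as a set, not of individual Fourier coefficients. The finite-index hypothesis does not manufacture this symmetry either. So the vanishing you claim is unjustified.

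The paper's argument avoids both issues. The i.c.c.\ hypothesis is used exactly once, and not to kill a unitary or to average Fourier coefficients: it implies (via \cite{jolissaint2012examples} or \cite[Lemma~3.5.5]{Brugger18}) that the conjugation action $\Gamma\curvearrowright\ell^2(\Gamma)$ is weakly mixing, so the only finite-dimensional $\Gamma$-invariant subspace is $\bC 1$. Feeding this single fact back into the \emph{proof} of \cite[Theorem~3.16]{chifan2020rigidity} (rather than just its statement) upgrades the conclusion to a clean trichotomy: either $N=\bC 1$, or $\Lambda$ is infinite amenable, or $N=L(\Lambda)$. The middle case is ruled out because the groups in question have trivial amenable radical, and the first case is $L(\{e\})$. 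No commensurability-to-equality upgrade is ever attempted; the i.c.c.\ assumption intervenes earlier, inside the deformation/rigidity machinery, to prevent the finite-dimensional slack from appearing in the first place.
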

\begin{proof}
Suppose that $\Gamma$ is an i.c.c. group. By considering the conjugation action of $\Gamma$ on itself, it follows from \cite[Theorem~2.5 \text{and} Theorem~3.3] {jolissaint2012examples} that the $\Gamma$-action on $L(\Gamma)$ is weakly mixing, i.e., the only finite-dimensional $\Gamma$-invariant subspace in $\ell^2(\Gamma)$ is $\bC1$ (also see \cite[Lemma 3.5.5]{Brugger18}). Thus, every finite-dimensional $\Gamma$-invariant von Neumann  subalgebra in $L(\Gamma)$ is $\bC1$. With this fact in mind, the same proof of \cite[Theorem~3.16]{chifan2020rigidity} actually shows that there is a normal subgroup $\Lambda\lhd \Gamma$ such that $N\leq L(\Lambda)\leq N\vee N'\cap L(\Lambda)$ and one of the following holds:
\begin{enumerate}
    \item $N=\bC 1$, or
    \item $\Lambda$ is infinite amenable, or
    \item $N=L(\Lambda)$.
\end{enumerate}
However, since $\Gamma$ has only trivial amenable radical, we see that item (2) does not appear. Now, we can finish the proof by taking $\Lambda=\{e\}$ when item (1) appears.
\end{proof}
However, it is not clear to us whether the factorial assumption on $N$ can be relaxed to just von Neumann subalgebras in general.
We now present examples of groups without ISR property.

\begin{example}\label{example}
$\Gamma=\bZ/{n\bZ}\oplus F_2$ does not have ISR property for any $n\geq 2$, where $F_2$ denotes the nonabelian free group generated by two elements.
This is clear by Proposition \ref{necessary_condition_for_ISR}.
\end{example}
The same example also reveals that we can not drop the torsion free assumption from \thref{invariantsubalgebras} in general.  One can also construct examples of i.c.c. groups without the ISR property.
\begin{example}
Let $A$ be an abelian group such that it contains nontrivial elements with order larger than two. Let $\Gamma=A\rtimes H$, where $H\curvearrowright A$ by group automorphisms. Moreover, assume that $\Gamma$ is i.c.c., for example, take $\Gamma=\mathbb{Z}^2\rtimes SL(2,\mathbb{Z})$ or $\mathbb{Z}\wr \mathbb{Z}=(\oplus_{\mathbb{Z}}\mathbb{Z})\rtimes \mathbb{Z}$. Now, define $\mathcal{M}$ as the von Neumann subalgebra of $L(A)$ with symmetric Fourier coefficients, i.e., $\mathcal{M}=\{x=\sum_{v\in A}\lambda_vu_v, \lambda_v=\lambda_{-v},~\forall~v\in A\}\subset L(A)$. Then, clearly $\mathcal{M}$ is a $\Gamma$-invariant subalgebra which is not of the form $L(\Lambda)$ for any $\Lambda\lhd \Gamma$.
\end{example}

\section{Proof of main results}\label{sec: proof of theorems}

In this section, we prove the theorems in the introduction.

\noindent

\begin{proof}[Proof of \thref{invariantsubalgebras}] Let $\Gamma$ be a torsion free non-amenable hyperbolic group. In addition to having cyclic centralizers for nontrivial elements, it is well known that these groups have property $P_{\text{nai}}$. The proof is now a consequence of  \thref{abstractcond} and \thref{conditionsforabstractcond}.
\end{proof}

In fact, we observe that the proof of \thref{abstractcond} can be easily generalized to deal with direct product groups. In preparation for this, we introduce a useful lemma.

\begin{lem}\thlabel{lem: kill rest terms in direct product case}
Let $n\geq 2$ and $\Gamma=\Gamma_1\times \cdots \times\Gamma_n$ be the direct product of torsion free groups $\Gamma_i$. Let $\cM\subseteq L(\Gamma)$ be a von Neumann subalgebra. Let $\mathbb{E}_{\cM}: L(\Gamma)\to \cM$ be the trace preserving conditional expectation onto $\cM$. Fix any nontrivial elements $\gamma_i\in \Gamma_i$ for $1\leq i\leq n$. Assume 
\begin{align}\label{eq: identity 1 in lemma in direct product case}
\mathbb{E}_{\cM}(\lambda(\gamma_1,\ldots, \gamma_n))=\sum_{(\epsilon_i)_i\in \{0, 1\}^n}c_{\epsilon_1\ldots\epsilon_n}\lambda(\gamma_1^{\epsilon_1},\ldots, \gamma_n^{\epsilon_n}),
\end{align}
where $c_{\epsilon_1\ldots\epsilon_n}\in \bC$ and 
\begin{align}\label{eq: identity 2 in lemma in direct product case}
\mathbb{E}_{\cM}\left(\lambda(\gamma_1^{\epsilon_1},\ldots, \gamma_n^{\epsilon_n})\right)=\theta_{\epsilon_1\ldots\epsilon_n}\lambda(\gamma_1^{\epsilon_1},\ldots, \gamma_n^{\epsilon_n}),~ \forall~(\epsilon_1,\ldots, \epsilon_n)\neq (1,1,\ldots, 1),
\end{align}
where $\theta_{\epsilon_1\ldots\epsilon_n}\in\bC$. Then $c_{\epsilon_1\ldots\epsilon_n}=0$ for all $(\epsilon_1,\ldots, \epsilon_n)\neq (1,1,\ldots, 1)$.
\end{lem}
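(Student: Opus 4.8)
The plan is to mimic the proof of \thref{abstractcond}, replacing the single free pair $(h, shs^{-1})$ by a coordinate-wise construction. Fix nontrivial $\gamma_i \in \Gamma_i$. For each $i$ I would choose (using the hypotheses available on the $\Gamma_i$, namely property $P_{\text{nai}}$ / the malnormal-free decomposition) an element $s_i \in \Gamma_i$ such that $\gamma_i$ and $s_i\gamma_i s_i^{-1}$ are free inside $\Gamma_i$; write $a_i = \gamma_i$ and $b_i = s_i\gamma_i s_i^{-1}$, and set $a = (a_1,\dots,a_n)$, $b = (b_1,\dots,b_n)$, $s = (s_1,\dots,s_n)$, so that $b = sas^{-1}$ and $\mathbb{E}_{\cM}(\lambda(b)) = \lambda(s)\mathbb{E}_{\cM}(\lambda(a))\lambda(s^{-1})$ by $\Gamma$-equivariance. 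Then $\mathbb{E}_{\cM}(\lambda(b))$ has the analogous expansion $\sum_{(\epsilon_i)} c_{\epsilon_1\dots\epsilon_n}\lambda(b_1^{\epsilon_1},\dots,b_n^{\epsilon_n})$, and by \eqref{eq: identity 2 in lemma in direct product case} (conjugated by $s$) each term $\mathbb{E}_{\cM}(\lambda(b_1^{\epsilon_1},\dots,b_n^{\epsilon_n}))$ with $(\epsilon_i)\neq(1,\dots,1)$ equals $\theta_{\epsilon_1\dots\epsilon_n}\lambda(b_1^{\epsilon_1},\dots,b_n^{\epsilon_n})$.

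Next I would expand both sides of the bimodularity identity
\[
\mathbb{E}_{\cM}\bigl(\lambda(a)\,\mathbb{E}_{\cM}(\lambda(b))\bigr) = \mathbb{E}_{\cM}(\lambda(a))\,\mathbb{E}_{\cM}(\lambda(b)),
\]
exactly as in the proof of \thref{abstractcond}. The right-hand side is $\sum_{(\epsilon),(\delta)} c_{(\epsilon)} c_{(\delta)} \lambda(a_1^{\epsilon_1}b_1^{\delta_1},\dots,a_n^{\epsilon_n}b_n^{\delta_n})$; since $\langle a_i, b_i\rangle \cong \mathbb{F}_2$ for each $i$, the elements $(a_1^{\epsilon_1}b_1^{\delta_1},\dots)$ are pairwise distinct across distinct $((\epsilon),(\delta))$, so these are genuinely the Fourier coefficients. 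The left-hand side, using \eqref{eq: identity 2 in lemma in direct product case} to pull out the scalar on the off-corner terms and the $\Gamma$-equivariance / support bookkeeping on the corner term $c_{1\dots1}\mathbb{E}_{\cM}(\lambda(a_1b_1,\dots,a_nb_n))$, is a sum of terms whose supports I can control: the corner contributes $c_{1\dots1}$ times something supported on products of primitive powers $(h_1^{j_1},\dots)$ with $a_ib_i = h_i^{k_i}$, while each off-corner term $\theta_{(\delta)}\mathbb{E}_{\cM}(\lambda(a_1b_1^{\delta_1},\dots))$ is supported appropriately.

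The heart of the argument is then the combinatorial comparison: I claim that for $(\epsilon)\neq(1,\dots,1)$, the group element $(a_1^{\epsilon_1}b_1^{0},\dots)$ — more precisely the relevant monomials on the RHS with $c_{(\epsilon)}c_{0\dots0}$ — cannot lie in any of the supports appearing on the LHS. Working coordinate by coordinate inside each $\langle a_i,b_i\rangle\cong\mathbb{F}_2$ and invoking \thref{same_power} exactly as in \thref{abstractcond}, I would show that if such a term survived, then in every coordinate where $\epsilon_i = 1$ (there is at least one coordinate where $\epsilon_i \neq $ the "expected" value) one would be forced into an equation $(a_i^{k_i}b_i^{\ell_i})^{p} = (a_i^{n_i}b_i^{\ell_i'})^{q}$ with incompatible exponents, giving a contradiction. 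This forces $c_{(\epsilon)} c_{0\dots0} = 0$ whenever $(\epsilon)\neq(1,\dots,1)$. To finish, I would argue as in \thref{abstractcond} that $c_{0\dots0} \neq 0$: if $c_{0\dots0} = 0$ were consistent with some other $c_{(\epsilon)}\neq 0$ I would instead run the same comparison pairing that nonzero coefficient against $c_{0\dots0}$ or apply the trace $\tau$ (which annihilates $\mathbb{E}_{\cM}$ on nontrivial group elements) to derive that $\mathbb{E}_{\cM}(\lambda(b))$ would have to vanish or reduce to a scalar that is forced to be zero — contradicting that we are in the corner case — and conclude $c_{(\epsilon)} = 0$ for all $(\epsilon)\neq(1,\dots,1)$.

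\textbf{Main obstacle.} The delicate point is the support bookkeeping for the corner term $c_{1\dots1}\mathbb{E}_{\cM}(\lambda(a_1b_1,\dots,a_nb_n))$ on the left-hand side: one must show its support is confined to products of primitive powers in each coordinate (using Condition (2)-type control, i.e. \thref{control_support_in_relative_commutant} or malnormality) and then verify coordinate-wise via \thref{same_power} that none of these monomials collide with the off-corner monomials $(a_i^{\epsilon_i},\dots)$ on the right-hand side. Handling the bookkeeping uniformly in $n$ and making sure the unique-product property holds for the relevant supports (so that products of Fourier expansions behave as expected) is where the real care is needed; the rest is a direct induction/coordinate-wise reduction to the $n=1$ case already carried out in \thref{abstractcond}.
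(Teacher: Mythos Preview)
Your approach misreads both the hypotheses of the lemma and its role in the paper. The lemma does \emph{not} assume that $\cM$ is $\Gamma$-invariant, nor does it assume any $P_{\text{nai}}$ or malnormal-decomposition hypotheses on the $\Gamma_i$ beyond torsion-freeness. Consequently you cannot choose $s_i$ with $\gamma_i$ free from $s_i\gamma_i s_i^{-1}$, and you cannot invoke $\Gamma$-equivariance to write $\mathbb{E}_{\cM}(\lambda(b))=\lambda(s)\mathbb{E}_{\cM}(\lambda(a))\lambda(s^{-1})$. In the paper, the free-group combinatorics you are sketching is carried out \emph{outside} this lemma, in the proof of \thref{prop: direct product of groups with the two conditions have ISR property}; it is precisely that argument which reduces the support of $\mathbb{E}_{\cM}(\lambda(\gamma_1,\dots,\gamma_n))$ to the $2^n$ corner terms appearing in \eqref{eq: identity 1 in lemma in direct product case}. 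The present lemma is then a purely algebraic finishing step: one applies $\mathbb{E}_{\cM}$ to both sides of \eqref{eq: identity 1 in lemma in direct product case}, uses idempotence together with \eqref{eq: identity 2 in lemma in direct product case} to obtain $c_{1\dots1}\in\{0,1\}$ and $c_{(\epsilon)}(1-c_{1\dots1}-\theta_{(\epsilon)})=0$, and then combines this with the orthogonality relation $\langle x-\mathbb{E}_{\cM}(x),\cM\rangle=0$ (evaluated against $\mathbb{E}_{\cM}(\lambda(\gamma_1^{\epsilon_1},\dots,\gamma_n^{\epsilon_n}))$) to force $c_{(\epsilon)}=0$ for $(\epsilon)\neq(1,\dots,1)$.

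Even if one granted you the extra hypotheses, your endgame has a concrete gap. Since $\mathbb{E}_{\cM}$ is trace-preserving and $(\gamma_1,\dots,\gamma_n)\neq e$, one has $c_{0\dots0}=\tau\bigl(\mathbb{E}_{\cM}(\lambda(\gamma_1,\dots,\gamma_n))\bigr)=\tau(\lambda(\gamma_1,\dots,\gamma_n))=0$ automatically. Hence the relation $c_{(\epsilon)}c_{0\dots0}=0$ that your support comparison would produce carries no information at all, and you certainly cannot ``argue that $c_{0\dots0}\neq 0$'' as you propose. The bimodularity/support argument you outline therefore does not isolate the individual off-corner coefficients $c_{(\epsilon)}$; this is exactly why the paper needs the separate orthogonality step rather than another round of \thref{same_power}-type combinatorics.
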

For ease of notations in the proof below, we will ignore the $\lambda$ part, i.e., we will write $\lambda(g)$ simply as $(g)$ for all $g\in \Gamma$. Moreover, $\tau$ will denote the canonical trace on $L(\Gamma)$. It then follows that $\langle a, b\rangle=\tau(b^*a)$ for any $a, b\in L(\Gamma)$.  For the simplicity of writing, we shall drop $\mathcal{M}$ from the canonical conditional expectation $\mathbb{E}_{\M}$ and write it as $\mathbb{E}$.
\begin{proof}[Proof of Lemma \ref{lem: kill rest terms in direct product case}]

Applying $\mathbb{E}(\cdot)$ on both sides of \eqref{eq: identity 1 in lemma in direct product case}, we obtain that 
\begin{align*}
&\sum_{(\epsilon_i)_i\in \{0, 1\}^n}c_{\epsilon_1\ldots\epsilon_n}(\gamma_1^{\epsilon_1},\ldots, \gamma_n^{\epsilon_n})\\
&=\mathbb{E}(\gamma_1,\ldots, \gamma_n)\\
&=\mathbb{E}\left(\sum_{(\epsilon_i)_i\in \{0, 1\}^n}c_{\epsilon_1\ldots\epsilon_n}(\gamma_1^{\epsilon_1},\ldots, \gamma_n^{\epsilon_n})\right)\\
&=\sum_{(\epsilon_i)_i\in \{0, 1\}^n}c_{\epsilon_1\ldots\epsilon_n}\E(\gamma_1^{\epsilon_1},\ldots, \gamma_n^{\epsilon_n})\\
&\overset{\eqref{eq: identity 2 in lemma in direct product case}}{=}c_{11\ldots 1}\sum_{(\epsilon_i)_i\in \{0, 1\}^n}c_{\epsilon_1\ldots\epsilon_n}(\gamma_1^{\epsilon_1},\ldots, \gamma_n^{\epsilon_n})
+\sum_{(\epsilon_1,\ldots, \epsilon_n)\neq (1,1,\ldots, 1)}c_{\epsilon_1\ldots\epsilon_n}\theta_{\epsilon_1\ldots\epsilon_n}(\gamma_1^{\epsilon_1},\ldots, \gamma_n^{\epsilon_n}).
\end{align*}
By comparing coefficients on both sides, we deduce that
\begin{align}\label{eq: system of eqs I}
\begin{cases}
c_{\epsilon_1\ldots\epsilon_n}&=c_{11\ldots 1}c_{\epsilon_1\ldots\epsilon_n}+c_{\epsilon_1\ldots\epsilon_n}\theta_{\epsilon_1\ldots\epsilon_n},~\forall~(\epsilon_1,\ldots, \epsilon_n)\neq (1,1,\ldots, 1),\\
c_{11\ldots 1}&=c_{11\ldots 1}^2.
\end{cases}
\end{align}
Hence, $c_{11\ldots 1}$ is either $0$ or $1$. We will deal with these two cases separately.
Suppose that $c_{11\ldots 1}=0$. Then, plugging it into \eqref{eq: system of eqs I}, we get that $c_{\epsilon_1\ldots\epsilon_n}(1-\theta_{\epsilon_1\ldots\epsilon_n})=0$. We now claim that $\forall~(\epsilon_1,\ldots, \epsilon_n)\neq (1,1,\ldots, 1)$, 
\[\overline{\theta_{\epsilon_1\ldots\epsilon_n}}c_{\epsilon_1\ldots\epsilon_n}=\left\langle (\gamma_1,\ldots, \gamma_n)-\E(\gamma_1,\ldots, \gamma_n), \E(\gamma_1^{\epsilon_1},\ldots,\gamma_n^{\epsilon_n})\right\rangle=0.\]
Indeed, first using the fact that for all $x\in \mathcal{M}$, $x-\mathbb{E}(x)$ is perpendicular to $\M$ with respect to the inner product induced by the trace $\tau$, we obtain that
\begin{align*}
\left\langle (\gamma_1,\ldots, \gamma_n)-\E(\gamma_1,\ldots, \gamma_n), \E(\gamma_1^{\epsilon_1},\ldots,\gamma_n^{\epsilon_n})\right\rangle=0.
\end{align*}
On the other hand, a plain calculation yields that
\begin{align*}
&\left\langle (\gamma_1,\ldots, \gamma_n)-\E(\gamma_1,\ldots, \gamma_n), \E(\gamma_1^{\epsilon_1},\ldots,\gamma_n^{\epsilon_n})\right\rangle\\&=\tau\left(\left(\E(\gamma_1^{\epsilon_1},\ldots,\gamma_n^{\epsilon_n})\right)^*\left((\gamma_1,\ldots, \gamma_n)-\E(\gamma_1,\ldots, \gamma_n)\right)\right)\\&=\tau\left(\left(\overline{\theta_{\epsilon_1\ldots\epsilon_n}}(\gamma_1^{\epsilon_1},\ldots,\gamma_n^{\epsilon_n})^*\right)\left((\gamma_1,\ldots, \gamma_n)-E(\gamma_1,\ldots, \gamma_n)\right)\right)\\&=-\overline{\theta_{\epsilon_1\ldots\epsilon_n}}c_{\epsilon_1\ldots\epsilon_n},~\forall~(\epsilon_1,\ldots, \epsilon_n)\neq (1,1,\ldots, 1).
\end{align*}
Therefore, we deduce that $c_{\epsilon_1\ldots\epsilon_n}\overline{\theta_{\epsilon_1\ldots\epsilon_n}}=0$. Hence, we get that $c_{\epsilon_1\ldots\epsilon_n}=0$ for all $(\epsilon_1,\ldots, \epsilon_n)\neq (1,1,\ldots, 1)$, which finishes the proof.\\
Now, suppose that $c_{11\ldots 1}=1$. Plugging it into \eqref{eq: system of eqs I}, we get that $c_{\epsilon_1\ldots\epsilon_n}\theta_{\epsilon_1\ldots\epsilon_n}=0$ for all $(\epsilon_1,\ldots, \epsilon_n)\neq (1,1,\ldots, 1)$. By applying $\mathbb{E}(\cdot)$ on both sides of \eqref{eq: identity 2 in lemma in direct product case} we see that $\theta_{\epsilon_1\ldots\epsilon_n}\in \{0, 1\}$ for all $(\epsilon_1,\ldots, \epsilon_n)\neq (1,1,\ldots, 1)$.

If $\theta_{\epsilon_1\ldots\epsilon_n}=0$, then we can argue similarly as above to obtain that \[\overline{c_{\epsilon_1\ldots\epsilon_n}}=\langle (\gamma_1^{\epsilon_1},\ldots, \gamma_n^{\epsilon_n})-E(\gamma_1^{\epsilon_1},\ldots,\gamma_n^{\epsilon_n}), E(\gamma_1,\ldots, \gamma_n) \rangle=0.\]

If $\theta_{\epsilon_1\ldots\epsilon_n}=1$, then we directly get that $c_{\epsilon_1\ldots\epsilon_n}=c_{\epsilon_1\ldots\epsilon_n}\theta_{\epsilon_1\ldots\epsilon_n}=0$.

Either way, it follows that $c_{\epsilon_1\ldots\epsilon_n}=0$ for all $(\epsilon_1,\ldots, \epsilon_n)\neq (1,1,\ldots, 1)$. This completes the proof.
\end{proof}
We now proceed to prove ISR property for a finite direct product of groups which satisfy the assumptions of \thref{abstractcond}.
We remark that a group having Property $P_{\text{nai}}$ is necessarily i.c.c..
\begin{prop}\thlabel{prop: direct product of groups with the two conditions have ISR property}
Let $n\geq 2$ and $\Gamma_i$ be groups as in  \thref{{abstractcond}} for all $1\leq i\leq n$. Then $\Gamma:=\Gamma_1\times \cdots  \times \Gamma_n$ satisfies ISR property.
\end{prop}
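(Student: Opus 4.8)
The strategy is to mimic the single-factor argument in the proof of \thref{abstractcond}, but to handle the several coordinates simultaneously using \thref{lem: kill rest terms in direct product case} as the combinatorial engine. Let $\mathcal M\subseteq L(\Gamma)$ be a $\Gamma$-invariant von Neumann subalgebra with trace-preserving $\Gamma$-equivariant conditional expectation $\mathbb E=\mathbb E_{\mathcal M}$, and set $\Lambda=\{g\in\Gamma:\mathbb E(\lambda(g))\neq 0\}$. As explained in the text following \thref{abstractcond}, it suffices to prove $\mathbb E(\lambda(g))\in\mathbb C\lambda(g)$ for every $g\in\Gamma$; then automatically $a_g\in\{0,1\}$, $\Lambda$ is a normal subgroup, and $\mathcal M=L(\Lambda)$. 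So fix $g=(g_1,\dots,g_n)\in\Lambda$ with $g\neq e$; reordering, we may assume $g_i\neq e$ for $1\le i\le m$ and $g_i=e$ for $i>m$. For each $i\le m$, Condition~\eqref{condition-1} for $\Gamma_i$ lets us write $g_i=h_i^{n_i}$ with $h_i\in\Gamma_i$ primitive and $n_i\neq 0$, and gives $s_i\in\Gamma_i$ with $\langle h_i,s_ih_is_i^{-1}\rangle\cong\mathbb F_2$.

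\textbf{Step 1: locating $\mathbb E(\lambda(g))$.} First I would show $\mathrm{supp}(\mathbb E(\lambda(g)))\subseteq\langle h_1\rangle\times\cdots\times\langle h_m\rangle\times\{e\}\times\cdots\times\{e\}$. For each $i\le m$, conjugating by $\lambda(h_i^k)$ (an element of $L(\Gamma)$ commuting with $\lambda(g)$) and using $\Gamma$-equivariance shows $\mathbb E(\lambda(g))\in L(\langle h_i\rangle)'\cap L(\Gamma)$; since $h_i$ is primitive in the torsion-free group $\Gamma_i$ with cyclic centralizers, Condition~\eqref{condition-2} applied inside $L(\Gamma_i)$ together with the product decomposition $L(\Gamma)=L(\Gamma_1)\bar\otimes\cdots\bar\otimes L(\Gamma_n)$ forces the $i$-th coordinate of the support to lie in $\langle h_i\rangle$. (Concretely, $L(\langle h_i\rangle)'\cap L(\Gamma)=L(\langle h_i\rangle)\,\bar\otimes\,(\bigotimes_{j\neq i}L(\Gamma_j))$, because taking relative commutants is compatible with tensor products of finite von Neumann algebras.) Similarly, for $i>m$ the element $g$ has trivial $i$-th coordinate, and invariance under $\lambda(\Gamma_i)$ kills all nontrivial $i$-th coordinates of the support (this is just the standard Fourier-coefficient argument, as in \thref{control_support_in_relative_commutant}). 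Hence we may write
\[
\mathbb E(\lambda(g))=\sum_{(k_1,\dots,k_m)\in\mathbb Z^m}c_{k_1\ldots k_m}\,\lambda(h_1^{k_1},\dots,h_m^{k_m},e,\dots,e).
\]

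\textbf{Step 2: reducing to finitely many exponents, then applying the Lemma.} I would then run the freeness trick coordinate by coordinate. Put $a_i=h_i$, $b_i=s_ih_is_i^{-1}$, and consider $\mathbb E(\lambda((b_1,\dots,b_m,e,\dots,e)^{?}))$ — more precisely the elements obtained from $g$ by conjugating each chosen coordinate. The point is that $\mathbb E$ applied to such conjugates also has support inside a product of cyclic groups (same argument as Step 1, since $b_i$ is again primitive: $b_i=s_ih_is_i^{-1}$ has centralizer $s_i\langle h_i\rangle s_i^{-1}$). Expanding $\mathbb E\big(\lambda(g)\cdot\mathbb E(\lambda(\tilde g))\big)=\mathbb E(\lambda(g))\cdot\mathbb E(\lambda(\tilde g))$ for the appropriate conjugate $\tilde g$ and comparing Fourier coefficients using \thref{same_power} in each free pair $\langle a_i,b_i\rangle\cong\mathbb F_2$, exactly as in the proof of \thref{abstractcond}, shows that in fact only the exponents $k_i\in\{0,n_i\}$ can have nonzero coefficient: for a fixed coordinate $i$, if some $k_i\notin\{n_i\}$ occurred with a companion nonzero coefficient on the $b_i$-side, Lemma~\ref{same_power} would give a contradiction, and a trace argument (as at the end of the proof of \thref{abstractcond}) rules out the degenerate case where the $b$-side collapses to a scalar. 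After rescaling each $h_i^{n_i}=g_i$, this says precisely that
\[
\mathbb E(\lambda(\gamma_1,\dots,\gamma_n))=\sum_{(\epsilon_i)_i\in\{0,1\}^n}c_{\epsilon_1\ldots\epsilon_n}\,\lambda(\gamma_1^{\epsilon_1},\dots,\gamma_n^{\epsilon_n})
\]
with $\gamma_i=g_i$ for $i\le m$ and, say, $\gamma_i$ some fixed nontrivial element (the terms with $\epsilon_i=1$, $i>m$, will have zero coefficient anyway, or one simply works with $m$ in place of $n$). Now for each $(\epsilon_1,\dots,\epsilon_n)\neq(1,\dots,1)$ the tuple $(\gamma_1^{\epsilon_1},\dots,\gamma_n^{\epsilon_n})$ has strictly fewer nontrivial coordinates than $g$, so by induction on the number of nontrivial coordinates we already know $\mathbb E(\lambda(\gamma_1^{\epsilon_1},\dots,\gamma_n^{\epsilon_n}))\in\mathbb C\lambda(\gamma_1^{\epsilon_1},\dots,\gamma_n^{\epsilon_n})$, i.e.\ hypothesis \eqref{eq: identity 2 in lemma in direct product case} of \thref{lem: kill rest terms in direct product case} holds. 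The base case, a single nontrivial coordinate, is exactly \thref{abstractcond} applied inside the factor $\Gamma_i$ (and using Step 1 to see $\mathbb E$ does not spread into the other factors). Invoking \thref{lem: kill rest terms in direct product case} then gives $c_{\epsilon_1\ldots\epsilon_n}=0$ for all $(\epsilon_1,\dots,\epsilon_n)\neq(1,\dots,1)$, hence $\mathbb E(\lambda(g))=c_{1\ldots1}\lambda(g)\in\mathbb C\lambda(g)$, completing the induction and the proof.

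\textbf{Main obstacle.} The delicate point is Step 1 together with the ``only exponents $0$ and $n_i$ survive'' refinement in Step 2: one must be careful that taking relative commutants and supports behaves well with respect to the tensor-product decomposition $L(\Gamma)=\bar\otimes_i L(\Gamma_i)$, and that the freeness arguments, which live inside a single factor $\Gamma_i$, can be run one coordinate at a time without the other coordinates interfering with the unique-product / unique-root bookkeeping. Once it is set up correctly, each individual comparison is just \thref{same_power} as in \thref{abstractcond}, and \thref{lem: kill rest terms in direct product case} packages the final elimination; the genuinely new ingredient over \thref{abstractcond} is the inductive use of the Lemma on the number of nontrivial coordinates.
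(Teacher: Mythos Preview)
Your proposal is correct and follows essentially the same approach as the paper: locate $\mathbb E(\lambda(g))$ inside $L(\prod_i\langle h_i\rangle)$ using the masa condition together with the i.c.c.\ property of each factor, run the freeness/\thref{same_power} comparison in every coordinate to cut the support down to exponents in $\{0,n_i\}$, and then finish with \thref{lem: kill rest terms in direct product case} plus induction on the number of nontrivial coordinates (the paper organizes this as a case split and induction on $n$, which amounts to the same thing). One small remark on Step~2: the ``trace argument'' from \thref{abstractcond} is not actually needed in the product setting---since the $a$- and $b$-side Fourier coefficients coincide under conjugation, from $c_{k}c_{k'}=0$ whenever $k_i\neq n_i$ and $k'_i\neq 0$ you can simply take $k'=k$ to obtain $c_k=0$ for $k_i\notin\{0,n_i\}$ directly, which is how the paper's Claim~3 is derived.
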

\noindent
We now proceed to prove the above for $n=2$. The general case would then follow by an easy induction argument. Let $(g,h)\in \Gamma_1\times\Gamma_2$ be a nontrivial element. As explained in the proof of \thref{{abstractcond}}, our goal is to show $\lambda(g, h)^{-1}\mathbb{E}_{\cM}(\lambda(g, h))\in \bC$. There are now three different cases to consider.\\ 
\textit{Case 1: $h=e$ and $g\neq e$.}\\
\noindent
Write $g=g_1^n$ for some primitive $g_1\in \G_1$ and some $n\geq 1$. Using the fact that $\Gamma_2$ is i.c.c, we notice that 
$\E_{\mathcal{M}}(\lambda(g, e))\in L(\langle g_1\rangle\times \G_2)'\cap L(\G_1\times \G_2)=L(\langle g_1\rangle\times \{e\})$. Therefore, we can write $$\E_{\mathcal{M}}(\lambda(g, e))=\sum_{k\in \mathbb{Z}}c_k\lambda(g_1^k, e).$$
Pick $s\in G_1$ such that $sg_1s^{-1}$ is free from $g_1$. Now, arguing similarly as in the proof of \thref{{abstractcond}}, we conclude that $\lambda(g, e)^{-1}\mathbb{E}_{\cM}(g, e)\in \bC$.\\
\textit{Case 2: $g=e$ and $h\ne e$.}\\
\noindent
By symmetry, this follows as in Case 1.\\ 
\textit{Case 3: $g\neq e\neq h$.}\\
\noindent
We write $g=g_1^n$ and $h=h_1^m$ for two primitive elements $g_1\in \G_1$ and $h_1\in \G_2$ and some $n, m\geq 1$. Arguing similarly as before, we obtain that $E_{\mathcal{M}}(\lambda(g, h))\in L(\langle g_1\rangle\times \langle h_1\rangle)'\cap L(\G_1\times \G_2)=L(\langle g_1\rangle\times \langle h_1\rangle)$. Hence, we can write $$\mathbb{E}_{\cM}(\lambda(g, h))=\sum_{i, j\in\mathbb{Z}}c_{ij}\lambda(g_1^i, h_1^j).$$ Pick $s\in \G_1$(resp. $t\in \G_2$) such that $sg_1s^{-1}$ is free from $g_1$ (resp. $tg_2t^{-1}$ is free from $g_2$). 
Therefore, we see that
\begin{align*}
\mathbb{E}_{\cM}(\lambda(g, h))\cdot \mathbb{E}_{\cM}(\lambda((s, t)(g, h)(s, t)^{-1}))
=\sum_{i, j, i', j'\in\mathbb{Z}}c_{ij}c_{i'j'}\lambda(g_1^isg_1^{i'}s^{-1}, h_1^jth_1^{j'}t^{-1}).\end{align*}
\begin{align*}
\mathbb{E}_{\cM}\big(\lambda(g, h)\cdot \mathbb{E}_{\cM}(\lambda((s, t)(g, h)(s, t)^{-1}))\big)
=\sum_{i'', j''\in\mathbb{Z}}c_{i''j''}\mathbb{E}_{\cM}(\lambda(gsg_1^{i''}s^{-1}, hth_1^{j''}t^{-1})).
\end{align*}
Following the same strategy as in \thref{{abstractcond}}, we obtain the following claim.\\
\textit{Claim 1: $(g_1^isg_1^{i'}s^{-1}, h_1^jth_1^{j'}t^{-1})\in G$ is uniquely determined by the tuple $(i, i', j, j')$.}\\
This in particular implies that if $c_{ij}c_{i'j'}\neq 0$, then \[(g_1^isg_1^{i'}s^{-1}, h_1^jth_1^{j'}t^{-1})\in supp(\mathbb{E}_{\cM}(\lambda(g, h))\cdot \mathbb{E}_{\cM}(\lambda((s, t)(g, h)(s, t)^{-1}))).\]
It is evident that if we assume either $i\neq n\wedge i'\neq 0$ or $j\neq m\wedge j'\neq 0$, then \[(g_1^isg_1^{i'}s^{-1}, h_1^jth_1^{j'}t^{-1})\not\in supp(\mathbb{E}_{\cM}\big(\lambda(g, h)\cdot \mathbb{E}_{\cM}(\lambda((s, t)(g, h)(s, t)^{-1}))\big)).\]
As a result, we obtain the following claim.\\
\textit{Claim 2: Assume $i\neq n\wedge i'\neq 0$ (resp. $j\neq m\wedge j'\neq 0$), then $c_{ij}c_{i'j'}=0$ for any $j, j'$ (resp. for any $i, i'$).}\\ 
Now we prove the following.\\
\textit{Claim 3: Assume that $i\not\in\{ n, 0\}$ (resp. $j\not\in\{ m, 0\}$). Then, $c_{ij}=0$ for all $j$ (resp. for all $i$).}\\
Towards a contradiction, suppose otherwise. Namely, assume that for some $i\not\in\{ n, 0\}$ and some $j$, $c_{ij}\neq 0$. Then for any $i'\neq 0$ and all $j'$, we deduce that $c_{i'j'}=0$ by Claim 2. By taking $(i', j')=(i, j)$, we obtain a contradiction. The case $j\neq m, 0$ can be proved similarly.\\

\noindent
Since $\tau(\lambda(g, h))=0$, we see that $c_{00}=0$. Combining this along with Claim 3, we obtain that 
\[\mathbb{E}_{\cM}(\lambda(g, h))=c_{n0}\lambda(g, e)+c_{0m}\lambda(e, h)+c_{nm}\lambda(g, h).\]
We are now left to show that $c_{n0}=c_{0m}=0$. Using Case 1 and Case 2, we may write $\mathbb{E}_{\cM}(\lambda(g, e))=c_n\lambda(g, e)$ and $\mathbb{E}_{\cM}(\lambda(e, h))=c_m\lambda(e, h)$. By taking $n=2$ and $(\gamma_1,\gamma_2)=(g, h)$ in \thref{lem: kill rest terms in direct product case}, we finish the proof for $n=2$.

\begin{proof}[Proof of \thref{prop: direct product of groups with the two conditions have ISR property}]
We now sketch the proof for general $n\geq 2$, which is based on induction on the number of $\G_i's$.
For ease of notations, we write $\mathbb{E}(g)$ instead of $\mathbb{E}_{\cM}(\lambda(g))$ for all $g\in \G$.
\noindent
Assume that the proposition holds when $\sharp~ \Gamma_i's<n$. Now, fix any $(g_1,\ldots, g_n)\in \Gamma$. Write $k=\sharp\{i:~g_i=e\}$. We may assume $k<n$.

If $k>0$, i.e., $\exists~i\in \{1,\ldots, n\}$ s.t. $g_i=e$, then it is not hard to see $\E((g_1,\ldots, g_n))\in\bC (g_1,\ldots, g_n)$ by essentially the same proof when the number of $\Gamma_i's$ is $k$. Therefore, without any loss of generality, we may assume $k=0$, i.e., all $g_i'$s are nontrivial. Write $g_i=\underline{g_i}^{\ell_i}$, where $\underline{g_i}$ are primitive elements in $\G_i$ and $\ell_i\geq 1$ for all $1\leq i\leq n$. Pick any $s_i\in G_i$ such that $\underline{g_i}$ is free from $s_i\underline{g_i}s_i^{-1}$ for all $1\leq i\leq n$.
Hence, we can write 
\begin{align*}
\E((g_1,\ldots, g_n))=\sum_{i_1,\ldots, i_n\in\mathbb{Z}}c_{i_1,\ldots, i_n}(\underline{g_1}^{i_1},\ldots, \underline{g_n}^{i_n}),\end{align*}
\begin{align*}
\E\big((s_1,\ldots, s_n)(g_1,\ldots, g_n)(s_1,\ldots, s_n)^{-1}\big)=\sum_{i_1',\ldots, i_n'\in\mathbb{Z}}c_{i_1',\ldots, i_n'}(s_1\underline{g_1}^{i_1}s_1^{-1},\ldots, s_n\underline{g_n}^{i_n}s_n^{-1}).
\end{align*}
Therefore, we have
\begin{align*}
&\sum_{i_1,\ldots, i_n\in\mathbb{Z}}\sum_{i_1',\ldots, i_n'\in\mathbb{Z}}c_{i_1,\ldots, i_n}c_{i_1',\ldots, i_n'}(\underline{g_1}^{i_1}s_1\underline{g_1}^{i_1'}s_1^{-1},\ldots, \underline{g_n}^{i_n}s_1\underline{g_n}^{i_n'}s_n^{-1})\\&=
\E((g_1,\ldots, g_n))\cdot \E((s_1g_1s_1^{-1},\ldots, s_ng_ns_n^{-1}))\\
&=
\E\big((g_1,\ldots, g_n)\cdot \E((s_1g_1s_1^{-1},\ldots, s_ng_ns_n^{-1}))\big)\\&
=\sum_{i_1'',\ldots, i_n''\in\mathbb{Z}}c_{i_1'',\ldots, i_n''}\E((g_1s_1\underline{g_1}^{i_1''}s_1^{-1},\ldots, g_ns_n\underline{g_n}^{i_n''}s_n^{-1})).
\end{align*}
Arguing similarly as in the proofs of \thref{abstractcond} and the case for $n=2$, the following claims follow.

\textit{Claim 1: $(\underline{g_1}^{i_1}s_1\underline{g_1}^{i_1'}s_1^{-1},\ldots, \underline{g_n}^{i_n}s_1\underline{g_n}^{i_n'}s_n^{-1})$ is uniquely determined by the tuple $(i_1,\ldots, i_n, i_1',\ldots, i_n')$.}

\textit{Claim 2: For any $j\in \{1,\ldots, n\}$, if $i_j\neq \ell_j$ and $i_j'\neq 0$, then $c_{i_1,\ldots, i_n}c_{i_1',\dots, i_n'}=0$ for all $i_t$, $i'_t$ and all $t\in \{1,\ldots, n\}\setminus \{j\}$.}

\textit{Claim 3: For any $j\in \{1,\ldots, n\}$, if $i_j\not\in\{ \ell_j, 0\}$, then $c_{i_1,\ldots, i_n}=0$ for all $i_1,\ldots,\widehat{i_j},\ldots, i_n$, where $\widehat{i_j}$ means $i_j$ does not appear.}

Using Claim 3, we can conclude that
\begin{align*}
\E((g_1,\ldots, g_n))=\sum_{(\epsilon_i)_i\in \{0, 1\}^n}c_{\epsilon_1\ldots\epsilon_n}(g_1^{\epsilon_1},\ldots, g_n^{\epsilon_n}).
\end{align*}
We can now appeal to Lemma \ref{lem: kill rest terms in direct product case} in order to conclude that 
$E((g_1,\ldots, g_n))\in \bC(g_1,\ldots, g_n)$.

This finishes the whole proof.
\end{proof}

We do not know whether Theorem 3.16 and Corollary 3.17 in \cite{chifan2020rigidity} can be generalized to deal with direct product groups. The following question is natural.

\begin{question}
Let $G$ and $H$ be two infinite groups satisfying ISR property. Does $G\times H$ also satisfy this property?
\end{question}

Now, let $\Gamma$ be a group as in \cite[Theorem~4.1]{PetersonThom2011}. We can write $\Gamma$ as a disjoint union $\Gamma=\{e\}\cup\bigcup_{i\in I}\dot{\Gamma_i}$, where $\{\Gamma_i:~i\in I\}$ is a family of subgroups of $\Gamma$ and $\dot{\Gamma_i}=\Gamma_i\setminus \{e\}$. Moreover, $\Gamma_i$ is free from $\Gamma_j$ for $i\neq j$. Also, it is assumed that $\beta_1^{(2)}(\Gamma_i)=0$ for all $i\in I$. In particular, since $\beta^{(2)}_1(\Gamma)>0$, we may write $\{1, 2\}\subseteq I$ and $\Gamma_1$ and $\Gamma_2$ are nontrivial.

\begin{proof}[Proof of \thref{invariantsubalgebras_2}]
Let $e\ne g\in \Gamma$ be given.
It follows from the above observation that
we can assume $g\in \Gamma_1$ without any loss of generality. Since $\Gamma_j$ is free from $\Gamma_1$ for any $j\neq 1$, we get that $C_{\Gamma}(g^i)\subseteq \Gamma_1$ for all $i\geq 1$. Thus, by an application of \thref{malnormal}, we see that $L(\langle g\rangle)'\cap L(\Gamma)\subseteq L(\Gamma_1)$. 
Hence, $\mathbb{E}_{\cM}(\lambda(g))\in L(\langle g\rangle)'\cap L(\Gamma)\subseteq L(\Gamma_1)$. We  write 
\[\mathbb{E}_{\cM}(\lambda(g))=\sum_{g_1\in \Gamma_1}c_{g_1}\lambda(g_1),~\mbox{where $c_{g_1}\in \bC$.}\]
Now, we pick a nontrivial $s\in \Gamma_2$. We have
\begin{equation}
\label{fourexp}
 \mathbb{E}_{\cM}(\lambda(sgs^{-1}))=\lambda(s)\mathbb{E}_{\cM}(\lambda(g))\lambda(s)^{-1}=\sum_{g_2\in \Gamma_1}c_{g_2}\lambda(sg_2s^{-1}).   
\end{equation}
We  now observe that
if $g_1sg_2s^{-1}=g_1'sg_2's^{-1}$, then
$$g_1'^{-1}g_1=sg_2'g_2^{-1}s^{-1}\in\Gamma_1\cap s\Gamma_1s^{-1}=\{e\}, \text{ i.e.,} ~(g_1, g_2)=(g_1', g_2').$$
Therefore, it follows that $\forall ~g_1, g_1', g_2, g_2'\in \Gamma_1$, $$g_1sg_2s^{-1}=g_1'sg_2's^{-1} \iff (g_1, g_2)=(g_1', g_2').$$
As a consequence, we obtain that \[supp(\mathbb{E}_{\cM}(\lambda(g))\cdot \mathbb{E}_{\cM}(\lambda(sgs^{-1})))=supp(\mathbb{E}_{\cM}(\lambda(g)))\cdot supp(\mathbb{E}_{\cM}(\lambda(sgs^{-1}))),\] so we may still simply write 
\begin{align*}
\mathbb{E}_{\cM}(\lambda(g))\cdot \mathbb{E}_{\cM}(\lambda(sgs^{-1}))
=\sum_{g_1, g_2\in \Gamma_1}c_{g_1}c_{g_2}\lambda(g_1sg_2s^{-1}).
\end{align*}
On the other hand, 
\begin{align*}
\mathbb{E}_{\cM}(\lambda(g)\cdot\mathbb{E}_{\cM}(\lambda(sgs^{-1})))
=\sum_{g_3\in \Gamma_1}c_{g_3}\mathbb{E}_{\cM}(\lambda(gsg_3s^{-1})).
\end{align*}
Note that \thref{control_support_in_relative_commutant} implies that
\[supp(\mathbb{E}_{\cM}(\lambda(g)\cdot\mathbb{E}_{\cM}(\lambda(sgs^{-1})))\subseteq \cup_{g_3\in\Gamma_1}\cup_{i\geq 1}C_{\Gamma}((gsg_3s^{-1})^i).\]
For $g_1\neq g$ and $g_2\neq e$, we claim the following.\\
\noindent
\textit{Claim: 
$g_1sg_2s^{-1}\not\in supp(\mathbb{E}_{\cM}(\lambda(g)\cdot\mathbb{E}_{\cM}(\lambda(sgs^{-1})))$.}\\
\noindent
\textit{Proof of Claim:}
It suffices to show that for any $g_3\in \Gamma_1$ and any $i\geq 1$, we have $g_1sg_2s^{-1}\not\in C_{\Gamma}((gsg_3s^{-1})^i)$.
Towards a contradiction, suppose that this is not the case. Then, 
\[g_1sg_2s^{-1}(gsg_3s^{-1})^i=(gsg_3s^{-1})^ig_1sg_2s^{-1}.\]
First, observe that $g_3\neq e$. If not, then the above identity implies that
$g_1sg_2s^{-1}g^i=g^ig_1sg_2s^{-1}$. We note that $g_2\neq e$, and $s$ is free from $\dot{\Gamma_1}$ which contains $\{g_1, g_2, g\}$. Hence, by working inside $\Gamma_1*\langle s\rangle$, we see that the ending letter on the left hand side is $g\in \Gamma_1$. However, this is different from the ending letter $s^{-1}\in \langle s \rangle$ on the right hand.  This is a contradiction.

Second, observe that we may assume that $g_1\neq e$ for the proof. Suppose otherwise, i.e., $g_1=e$. Then, we have $sg_2s^{-1}(gsg_3s^{-1})^i=(gsg_3s^{-1})^isg_2s^{-1}$. Since $g, g_2, g_3\neq e$ and $s$ is free from $\Gamma_1$ which contains $\{g_2, g_3, g\}$, the starting letter on the left hand side is $s\in \Gamma_2$. But, the starting letter on the right hand side is $g\in \Gamma_1$. This is again a contradiction. Therefore, we can assume that $g_1=e$.

Now, since $g_1\neq g$ and $e\not\in \{g_1, g_2, g\}$, we obtain a contradiction by comparing the initial letters on both sides of the original identity.
This completes the proof of the claim.

Using the above \textit{Claim} along with the equality $$\mathbb{E}_{\cM}(\lambda(g))\mathbb{E}_{\cM}(\lambda(sgs^{-1}))=\mathbb{E}_{\cM}(\lambda(g)\mathbb{E}_{\cM}(\lambda(gsg^{-1}))),$$ we deduce that 
$$c_{g_1}c_{g_2}=0,\forall g_1\neq g \text{ and } g_2\neq e.$$ 

If for some $g_1\neq g$, we have $c_{g_1}\neq 0$, then it follows that $c_{g_2}=0$ for all $g_2\neq e$. Therefore, from equation~(\ref{fourexp}), we obtain that
\[\mathbb{E}_{\cM}(\lambda(sgs^{-1}))=c_e\lambda(e).\]
Applying the canonical trace $\tau$ on both sides and using the fact that $\E_{\M}$ is trace preserving, we see that
\[0=\tau(\lambda(sgs^{-1})=\tau\left(\mathbb{E}_{\cM}(\lambda(sgs^{-1}))\right)=\tau\left(c_e\lambda(e)\right)=c_e.\]
It then follows that $\mathbb{E}_{\cM}(\lambda(sgs^{-1}))=0$. Hence, $c_{g_1}=0$, which cannot happen. Therefore, for all $g_1\neq g$, $c_{g_1}=0$, i.e., $\lambda(g)^{-1}\mathbb{E}_{\cM}(\lambda(g))\in \bC$. This completes the proof.
\end{proof}
\noindent
\thref{invariantsubalgebras_3} is all that remains to be shown. If all $\Gamma_i$'s are groups satisfying the assumptions of \thref{invariantsubalgebras}, then the proof follows by applying  \thref{prop: direct product of groups with the two conditions have ISR property} along with \thref{conditionsforabstractcond}. Therefore, we assume that all $\Gamma_i$'s are as in \thref{invariantsubalgebras_2}. We  also note that all the $\Gamma_i's$ are i.c.c. groups.

We will only present the proof for $n=2$ since the general case is similar to that of \thref{prop: direct product of groups with the two conditions have ISR property}.

\begin{proof}[Proof of \thref{invariantsubalgebras_3}]
For $j=1$ and $2$, let $\Gamma_j=\{e\}\cup\bigcup_i\dot{\Gamma_{ji}}$ for $j=1,2$ be the decomposition given in \cite[Theorem ~4.1]{PetersonThom2011}. Fix any nontrivial $(g_1, g_2)\in \Gamma$. We may assume $g_j\in \Gamma_{j1}$, for $j=1,2$. Our goal is to show that $\mathbb{E}_{\cM}(\lambda(g_1, g_2))\in \bC\lambda(g_1, g_2)$. As before, we have three cases to consider.\\
\noindent
\textit{Case 1: $g_1=e$ and $g_2\neq e$.}\\
Since $\mathbb{E}_{\cM}(\lambda(e, g_2))\in L(\Gamma_1\times \langle g_2\rangle)'\cap L(\Gamma)\subseteq L(\{e\}\times \Gamma_{21})$, we may write 
\[\mathbb{E}_{\cM}(e, g_2)=\sum_{g_{2j}\in \Gamma_{21}}c_{g_{2j}}\lambda(e, g_{2j})~\mbox{where}~c_{g_{2j}}\in \bC.\]
For any nontrivial $t\in \Gamma_{22}$, we have
\[\mathbb{E}_{\cM}(\lambda(e, tg_2t^{-1}))=\sum_{g_{2j}\in \Gamma_{21}}c_{g_{2j}}\lambda(e, tg_{2j}t^{-1}).\]
Arguing similarly as in the proof of \thref{invariantsubalgebras_2}, we conclude that $\mathbb{E}_{\cM}(\lambda(e, g_2))\in \bC\lambda(e, g_2)$.\\
\textit{Case 2: $g_2=e$ and $g_1\neq e$.}\\
The proof is similar to \textit{Case 1}. And, we deduce that $\mathbb{E}_{\cM}(\lambda(g_1, e))\in\bC\lambda(g_1, e)$.\\
\textit{Case 3: $g_1\neq e\neq g_2$.}\\
\noindent
Since $\mathbb{E}_{\cM}(\lambda(g_1, g_2))\in L(\langle g_1\rangle\times\langle g_2\rangle)'\cap L(\Gamma)\subseteq L(\Gamma_{11}\times \Gamma_{21})$, we can write 
\[\mathbb{E}_{\cM}(\lambda(g_1, g_2))=\sum_{i,j\in\mathbb{Z}}c_{ij}\lambda(g_{1i}, g_{2j}).\]
Note that in this case, $g_{1i}\in \Gamma_{11}$ and $g_{2j}\in \Gamma_{21}$ for all $i, j$.
For any nontrivial $s\in \Gamma_{12}$ and nontrivial $t\in\Gamma_{22}$, we have
\[\mathbb{E}_{\cM}(\lambda(sg_1s^{-1}, tg_2t^{-1}))=\sum_{i', j'\in\mathbb{Z}}c_{i'j'}\lambda(sg_{1i'}s^{-1}, tg_{2j'}t^{-1}).\]
The following claims follow exactly in the same way as in the proof of \thref{prop: direct product of groups with the two conditions have ISR property}.\\
\textit{Claim 1: $(g_{1i}sg_{1i'}s^{-1}, g_{2j}tg_{2j'}t^{-1})$ is uniquely determined by the tuple $(i, i', j, j')$.} 
Therefore,
\begin{align*}
\mathbb{E}_{\cM}(\lambda(g_1, g_2))\cdot \mathbb{E}_{\cM}(\lambda(sg_1s^{-1}, tg_2t^{-1}))=
\sum_{i, j, i', j'\in\mathbb{Z}}c_{ij}c_{i'j'}\lambda(g_{1i}sg_{1i'}s^{-1}, g_{2j}tg_{2j'}t^{-1}).
\end{align*}
Note that
\begin{align*}
\mathbb{E}_{\cM}(\lambda(g_1, g_2)\cdot \mathbb{E}_{\cM}(\lambda(sg_1s^{-1}, tg_2t^{-1})))=
\sum_{i'', j''\in\mathbb{Z}}c_{i''j''}\mathbb{E}_{\cM}(\lambda(g_1sg_{1i''}s^{-1}, g_2tg_{2j''}t^{-1})).
\end{align*}
\textit{Claim 2: Assume $g_{1i}\neq g_1\wedge g_{1i'}\neq e$ (resp. $g_{2j}\neq g_2\wedge g_{2j'}\neq e$), then $c_{ij}c_{i'j'}=0$ for all $j, j'$ (resp. for all $i, i'$).}\\ 
\textit{Claim 3: Assume $g_{1i}\neq g_1, e$ (resp. $g_{2j}\neq g_2, e$), then $c_{ij}=0$ for all $j$ (resp. for all $i$).}
Combining \textit{Claim 3} along with the fact that $\tau(\lambda(g_1, g_2))=0$, we obtain that
\[\mathbb{E}_{\cM}(\lambda(g_1, g_2))=c_{g_1, e}\lambda(g_1, e)+c_{e, g_2}\lambda(e, g_2)+c_{g_1, g_2}\lambda(g_1, g_2).\]
We can now apply Lemma \ref{lem: kill rest terms in direct product case} to conclude that $c_{g_1, e}=0=c_{e, g_2}$, i.e., \[\mathbb{E}_{\cM}(\lambda(g_1, g_2))\in \bC\lambda(g_1, g_2).\]
This finishes the proof of $n=2$.

The general case follows by an induction argument. The proof is similar to that of the induction argument illustrated in \thref{prop: direct product of groups with the two conditions have ISR property} and is left to the reader.
\end{proof}
For the groups $\Gamma$ considered in the above theorems, it follows that $L(\Gamma)$ does not admit any $\Gamma$-invariant Cartan subalgebras.
To further elaborate on it, let $(\M,\tau)$ be a tracial von Neumann algebra, i.e., a von Neumann algebra $\M$ equipped with a faithful normal tracial state $\tau : \M\rightarrow\C$.
Let $\mathcal{A}\subset\M$ be a von Neumann subalgebra. Let $\mathcal{U}(\mathcal{A})$ denote the group of unitary elements of $\mathcal{A}$. Then, $\mathcal{A}$ is called a Cartan subalgebra if 
\begin{enumerate}[label=(\alph*)]
\item $\mathcal{A}$ is the maximal abelian $*$-subalgebra inside $\M$, and,
\item the normalizer $\mathcal{N}_\M(\mathcal{A})=\{u\in \mathcal{U}(\mathcal{A})|~u\mathcal{A}u^*=\mathcal{A}\}$ generates $\M$ as a von Neumann algebra.
\end{enumerate}

\begin{cor}
\thlabel{noinvcartan}
Let $\Gamma$ be any group as in \thref{invariantsubalgebras_2} or \thref{invariantsubalgebras_3}. Then $L(\Gamma)$ does not admit any Cartan subalgebra $\mathcal{A}$ with $\lambda(\Gamma)\subseteq \mathcal{N}_{L(\Gamma)}(\mathcal{A})$.
\begin{proof}
Note that $\Gamma$ is i.c.c.. Assume such $\mathcal{A}$ exists, then $\mathcal{A}=L(\Lambda)$ for some normal subgroup $\Lambda\lhd \Gamma$. Clearly $\Lambda$ is infinite and abelian.

First, let $\Gamma$ be a group as in 
\thref{invariantsubalgebras_2}. To get a contradiction, it suffices to show that $\Gamma$ has trivial amenable radical, i.e., the maximal amenable normal subgroup of $\Gamma$ is trivial. Since every infinite amenable group has zero first $L^2$-Betti number, it can not be a normal subgroup inside $\Gamma$ with positive first $L^2$-Betti number (see e.g.,  \cite[Corollary 5.14]{PetersonThom2011}). Similarly, it is well-known that torsion-free non-amenable hyperbolic groups have trivial amenable radical (see e.g., \cite[Corollary 3]{arzhantseva2006relatively}).

Now, let $\Gamma$ be a group as in \thref{invariantsubalgebras_3}, say $\Gamma=\Gamma_1\times \cdots\times \Gamma_n$ for some $n\geq 2$. Denote by $\pi_i: \Gamma\to \Gamma_i$ the natural projection onto the $i$-th coordinate. Then, $\pi_i(\Lambda)$ is a normal abelian subgroup of $\Gamma_i$, hence $\pi_i(\Lambda)$ is trivial by the above proof. Thus $\Lambda\subseteq \prod_{i=1}^n\pi_i(\Lambda)$ is trivial. 
\end{proof}
\end{cor}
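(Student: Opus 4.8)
The plan is to leverage the ISR property, which these groups enjoy by Theorems \thref{invariantsubalgebras_2} and \thref{invariantsubalgebras_3}, to promote a hypothetical $\Gamma$-normalized Cartan subalgebra to the group von Neumann algebra of a normal subgroup, and then to contradict this using the fact that such groups have trivial amenable radical while any Cartan subalgebra is diffuse.

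First I would record that $\Gamma$ is i.c.c.: it is infinite, non-amenable, and satisfies the ISR property, so \thref{necessary_condition_for_ISR} applies. In particular $L(\Gamma)$ is a II$_1$ factor. Now suppose $\mathcal{A} \subset L(\Gamma)$ is a Cartan subalgebra with $\lambda(\Gamma) \subseteq \mathcal{N}_{L(\Gamma)}(\mathcal{A})$. Then $\lambda(g)\mathcal{A}\lambda(g)^* = \mathcal{A}$ for every $g \in \Gamma$, so $\mathcal{A}$ is a $\Gamma$-invariant von Neumann subalgebra, and the ISR property gives $\mathcal{A} = L(\Lambda)$ for some normal subgroup $\Lambda \lhd \Gamma$. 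Since $\mathcal{A}$ is abelian, $\Lambda$ is abelian; since a Cartan subalgebra of a II$_1$ factor is diffuse, $L(\Lambda)$ is diffuse, so $\Lambda$ is infinite. Thus $\Lambda$ is an infinite normal abelian (hence amenable) subgroup of $\Gamma$.

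The remaining point is to rule out such a $\Lambda$. For $\Gamma$ as in \thref{invariantsubalgebras_2}, an infinite amenable normal subgroup would have zero first $L^2$-Betti number and force $\beta_1^{(2)}(\Gamma) = 0$, contradicting positivity (use \cite[Corollary 5.14]{PetersonThom2011}); for a torsion-free non-amenable hyperbolic group the amenable radical is trivial (use \cite[Corollary 3]{arzhantseva2006relatively}). Either way, no such $\Lambda$ exists. For $\Gamma = \Gamma_1 \times \cdots \times \Gamma_n$ as in \thref{invariantsubalgebras_3}, I would apply each coordinate projection $\pi_i \colon \Gamma \to \Gamma_i$: every $\pi_i(\Lambda)$ is a normal abelian subgroup of $\Gamma_i$, hence trivial by the previous sentence, so $\Lambda \subseteq \prod_{i=1}^n \pi_i(\Lambda)$ is trivial, again contradicting that $\Lambda$ is infinite.

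The only step requiring any care is invoking the correct input that positive first $L^2$-Betti number is incompatible with an infinite amenable normal subgroup (and, for the hyperbolic case, triviality of the amenable radical); the rest is a routine dictionary between the group and its von Neumann algebra once the ISR property is available.
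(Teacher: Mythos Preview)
Your proof is correct and follows essentially the same route as the paper: use the ISR property to write a hypothetical $\Gamma$-normalized Cartan subalgebra as $L(\Lambda)$ for an infinite abelian normal subgroup $\Lambda$, then contradict this via triviality of the amenable radical (invoking \cite[Corollary 5.14]{PetersonThom2011} and \cite[Corollary 3]{arzhantseva2006relatively}), handling the product case by projecting coordinatewise. Your justification of why $\Gamma$ is i.c.c.\ (via \thref{necessary_condition_for_ISR}) and why $\Lambda$ is infinite (diffuseness in a II$_1$ factor) is slightly more explicit than the paper's, but otherwise the arguments coincide.
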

We remark that when $\Gamma$ is a non-elementary hyperbolic group, it follows from \cite[Theorem~4.1]{CS} that $L(\Gamma)$ does not have a Cartan subalgebra. 

\section{ISR property for a group with torsion elements}\label{sec: remaining questions}

Motivated by \thref{invariantsubalgebras}, \thref{invariantsubalgebras_2}, \cite[Corollary 3.17]{chifan2020rigidity} and Example \ref{example}, it is natural to expect a positive answer to the following question.
\begin{question}\label{question: general case with trivial amenable radical}
Can we extend \thref{invariantsubalgebras} to all hyperbolic groups, more generally, all acylindrically hyperbolic groups, with trivial amenable radical?
\end{question}

Using our technique, it seems plausible to show that for any $g\in \Gamma$ with infinite order, we have $\lambda(g)^{-1}\mathbb{E}_{\cM}(\lambda(g))\in \bC$. We denote by $\Lambda$ the subgroup consisting of all those elements which have non-zero image under $\mathbb{E}_{\M}$, i.e.,  $\Lambda=\langle g\in \Gamma:~\lambda(g^{-1})\mathbb{E}_{\cM}(\lambda(g))\neq 0,~\mbox{$g$ has infinite order}\rangle$. Clearly, $\Lambda\lhd \Gamma$ and $L(\Lambda)\subseteq \cM$. By \cite[Corollary 3.8]{chifan2020rigidity}, we are left to show that $\Lambda$ is nontrivial. Suppose $\Lambda$ is trivial, namely, $\mathbb{E}_{\cM}(\lambda(g))=0$ for all $g\in \Gamma$ with infinite order. In other words, for any $x\in\cM$, every group element appearing in the support set of $x$ has finite order. We want to claim that this is impossible unless $\mathcal{M}=\bC$. We do not know how to show this in general. The difficulty is that our strategy relies heavily on  the Fourier expansion of $\mathbb{E}_{\cM}(\lambda(g))$, which seems hard to control directly for a torsion element $g$. 

Nevertheless, it is still possible to do some algebraic manipulations in some special cases. Below, we give an example of a hyperbolic group which has torsion elements but still satisfies the ISR property.

\begin{prop}
$\Gamma=\bZ*\frac{\bZ}{2\bZ}$ satisfies the ISR property.
\end{prop}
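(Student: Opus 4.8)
The plan is to verify the two conditions of \thref{abstractcond}-type reasoning by hand, adapted to the torsion setting. Write $\Gamma=\bZ*\frac{\bZ}{2\bZ}=\langle a\rangle * \langle t\mid t^2=e\rangle$. The torsion elements of $\Gamma$ are precisely the conjugates of $t$ (and $e$), so the infinite-order elements are all the ``interesting'' ones. Following the strategy outlined at the end of the paper, I would first show $\lambda(g)^{-1}\mathbb{E}_{\cM}(\lambda(g))\in\bC$ for every infinite-order $g$, and then separately handle what $\mathbb{E}_{\cM}$ does to torsion elements so as to conclude $\cM=L(\Lambda)$ for a normal subgroup $\Lambda$.

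For the infinite-order case: given infinite-order $g$, I would find a primitive root, i.e., write $g=h^n$ where $\langle h\rangle = C_\Gamma(g)$ is a maximal cyclic group (primitive elements here are those generating their own centralizer). As $\Gamma$ is virtually free, hence hyperbolic, centralizers of infinite-order elements are virtually cyclic; since $\Gamma$ has elements of order $2$, a centralizer could a priori be infinite dihedral, but one checks that for the relevant $h$ the centralizer is cyclic (an infinite dihedral centralizer would force $g$ to commute with an order-2 element, contradicting the structure). Then I would invoke property $P_{\text{nai}}$ — which $\Gamma$ has, being a non-amenable acylindrically hyperbolic group with trivial finite radical — to find $s$ with $h$ and $shs^{-1}$ free, giving a copy of $\bF_2$. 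The remaining point is Condition (2): that $L(\langle h\rangle)$ is a masa in $L(\Gamma)$. By \thref{control_support_in_relative_commutant}, $L(\langle h\rangle)'\cap L(\Gamma)\subseteq L(vC_\Gamma(\langle h\rangle))$, and $vC_\Gamma(\langle h\rangle)=\bigcup_{i\ge 1}C_\Gamma(h^i)$; using the unique-root property of torsion-free-centralizer elements (more precisely, that distinct powers of $h$ have no extra conjugators), one shows $vC_\Gamma(\langle h\rangle)=\langle h\rangle$, giving the masa property. Then the exact argument of \thref{abstractcond} — expand $\mathbb{E}_{\cM}(\lambda(g))=\sum_k c_k\lambda(h^k)$, compare Fourier coefficients of $\mathbb{E}_{\cM}(\lambda(g))\mathbb{E}_{\cM}(\lambda(shgs^{-1}))$ with $\mathbb{E}_{\cM}(\lambda(g)\mathbb{E}_{\cM}(\lambda(sgs^{-1})))$ via \thref{same_power} — forces $c_k=0$ for $k\ne n$, and then $\tau$-invariance kills the $k=0$ residual term.

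The genuinely new part is dealing with torsion. Having shown $\mathbb{E}_{\cM}(\lambda(g))\in\bC\lambda(g)$ for infinite-order $g$, set $\Lambda$ to be the group generated by $\{g:\text{infinite order},\ \mathbb{E}_{\cM}(\lambda(g))\ne 0\}$; this is normal, $L(\Lambda)\subseteq\cM$, and I must show $\mathbb{E}_{\cM}(\lambda(\gamma))\in L(\Lambda)$ for the torsion $\gamma$ too, equivalently that $\cM\subseteq L(\Lambda)$. Take a torsion element, which up to conjugation is $t$. I would compute $\mathbb{E}_{\cM}(\lambda(t))$: it lies in $L(\langle t\rangle)'\cap L(\Gamma)$, and $\langle t\rangle$ has finite index in no proper subgroup, so I need to analyze $vC_\Gamma(\langle t\rangle)=C_\Gamma(t)$. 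Here $C_\Gamma(t)$ is finite (indeed equals $\langle t\rangle$, since in a free product the centralizer of a torsion element in a free factor is its centralizer in that factor), so $\mathbb{E}_{\cM}(\lambda(t))\in L(\langle t\rangle)=\bC\oplus\bC\lambda(t)$; write $\mathbb{E}_{\cM}(\lambda(t))=\alpha+\beta\lambda(t)$ with $\alpha=\tau(\mathbb{E}_{\cM}(\lambda(t)))=\tau(\lambda(t))=0$, so $\mathbb{E}_{\cM}(\lambda(t))=\beta\lambda(t)$. To finish I need: either $\beta=0$, or $t\in\Lambda$ (i.e. $\lambda(t)\in\cM$, which would then need $\langle\langle t\rangle\rangle$ normally generated to match). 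The main obstacle is precisely pinning down the possible values of $\beta$ and, if $\beta\ne 0$, showing $\lambda(t)\in\cM$ and that the resulting $\Lambda$ (which then contains the normal closure of $t$, which is all of $\Gamma$ modulo $\bZ$... actually the normal closure of $t$ is a free group of infinite rank and $\Gamma/\langle\langle t\rangle\rangle\cong\bZ$) is consistent with $\cM$; the cleanest route may be to argue via an auxiliary free-product computation: multiply $\mathbb{E}_{\cM}(\lambda(t))=\beta\lambda(t)$ against $\mathbb{E}_{\cM}$ of a conjugate $\lambda(a t a^{-1})$ or against $\mathbb{E}_{\cM}(\lambda(g))$ for a well-chosen infinite-order $g$ making $t$ and $g$ (or $tgt$) interact in a controlled way, use $\mathbb{E}_{\cM}$-bimodularity to get a coefficient equation forcing $\beta\in\{0,1\}$, and in the $\beta=1$ case deduce $\lambda(t)\in\cM$ and hence (since then every element appears) $\cM=L(\Gamma)=L(\Lambda)$ with $\Lambda=\Gamma$. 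I expect the bookkeeping of reduced words in $\bZ*\frac{\bZ}{2\bZ}$ to be the technical heart, but no conceptually new ingredient beyond what \thref{abstractcond}, \thref{same_power} and \thref{control_support_in_relative_commutant} already provide.
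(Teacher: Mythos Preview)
Your treatment of infinite-order elements is fine and matches the paper. The gap is in the torsion step, and it is fatal as written. You assert that $\mathbb{E}_{\cM}(\lambda(t))\in L(\langle t\rangle)'\cap L(\Gamma)\subseteq L(vC_\Gamma(\langle t\rangle))$ and then compute $vC_\Gamma(\langle t\rangle)=C_\Gamma(t)=\langle t\rangle$. But $\langle t\rangle$ has only two elements, so \emph{every} $g\in\Gamma$ has a $\langle t\rangle$-conjugation orbit of size at most $2$; hence $vC_\Gamma(\langle t\rangle)=\Gamma$ and \thref{control_support_in_relative_commutant} yields nothing. The conclusion $\mathbb{E}_{\cM}(\lambda(t))=\beta\lambda(t)$ is simply false in general: a priori $\mathbb{E}_{\cM}(\lambda(t))$ can be supported on the entire conjugacy class $\{\gamma t\gamma^{-1}:\gamma\in\Gamma\}$, which is infinite. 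Everything you build on that formula (the $\beta\in\{0,1\}$ dichotomy, etc.) therefore collapses.

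The paper handles this very differently. It first splits into two cases according to whether the normal subgroup $\Lambda$ you defined is trivial or not. If $\Lambda\neq\{e\}$, one does not touch torsion elements directly at all: instead one uses that any nontrivial normal subgroup of a hyperbolic group with trivial amenable radical is relatively i.c.c., so $L(\Lambda)'\cap L(\Gamma)=\bC$, and then invokes \cite[Corollary~3.8]{chifan2020rigidity} to conclude $\cM=L(\Lambda)$. Only in the case $\Lambda=\{e\}$ (so $\mathbb{E}_{\cM}(\lambda(g))=0$ for every infinite-order $g$, forcing every $x\in\cM$ to be supported on torsion elements) does one analyze $\mathbb{E}_{\cM}(\lambda(t))$. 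One writes it as a sum over \emph{all} conjugates $\gamma t\gamma^{-1}$, computes $\mathbb{E}_{\cM}(\lambda(t))\cdot\mathbb{E}_{\cM}(\lambda(\gamma t\gamma^{-1}))$ to be a scalar (because the products $t\cdot(\gamma g t g^{-1}\gamma^{-1})$ are almost all infinite order, hence killed by $\mathbb{E}_{\cM}$), deduces that $\mathbb{E}_{\cM}(\lambda(t))^2$ is a nonnegative scalar, and if nonzero, normalizes to a self-adjoint unitary $U$ that is then shown to commute with every $\lambda(\gamma^{-1}t\gamma)$; a relative i.c.c. argument forces $U\in\bC$, contradicting $\tau(U)=0$. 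This is precisely the ``genuinely new'' work you anticipated, but it cannot be shortcut by the virtual-centralizer lemma.
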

\begin{proof}
Let $\cM$ be a $\Gamma$-invariant von Neumann subalgebra. Our goal is to show $\lambda(g^{-1})\mathbb{E}_{\cM}(\lambda(g))\in \bC$ for all $g\in \Gamma$. 

Following the above strategy, we first show that this holds for any $g\in \Gamma$ with infinite order.

Write $\Gamma=\bZ*\frac{\bZ}{2\bZ}=\langle t\rangle *\langle s\rangle$. Observe that $\Gamma\cong F_2\rtimes \frac{\bZ}{2\bZ}=\langle t, sts\rangle\rtimes\langle s\rangle$. Thus, the set of nontrivial torsion elements in $\Gamma$ is $Tor(\Gamma)=\{\gamma s\gamma^{-1}:~\gamma\in \Gamma\}$, i.e., the set of conjugacy class of $s$. It is also clear that the centralizer $C(s)=\langle s\rangle$ since $\frac{\bZ}{2\bZ}\curvearrowright F_2$ has only the trivial element as a fixed point.

Let $g\in \Gamma$ be any element with infinite order. 

\noindent
Claim 1:  $L(\langle g\rangle)'\cap L(\Gamma)=L(\langle g_0\rangle)$ for some $g_0\in \Gamma$ with $g\in \langle g_0\rangle$.\\
\textit{Proof of Claim 1:}
In the light of \thref{control_support_in_relative_commutant}, it suffices to show that the subgroup $\cup_{i\geq 1}C(g^i)$ is infinite cyclic. Since $\Gamma$ is hyperbolic, it follows from \cite[Lemma~6.5]{dahmani} that $\cup_{i\geq 1}C(g^i)$ is infinite and virtually cyclic. Moreover, observe that $\cup_{i\geq 1}C(g^i)\cap Tor(\Gamma)=\emptyset$. Indeed, assume that $\gamma s\gamma^{-1}\in C(g^i)$ for some $i\geq 1$ and some $\gamma\in \Gamma$, then $\gamma g^i\gamma^{-1}\in C(s)=\langle s\rangle$, a contradiction as $g^i$ has infinite order. Therefore, we may find some $g_0\in \Gamma$ with $\langle g_0\rangle =\cup_{i\geq 1}C(g^i)$.  Clearly, $g\in \langle g_0\rangle$.

Next, since $\Gamma$ has trivial amenable radical, it possesses $P_\text{nai}$. As a result, we may find some $\gamma \in \Gamma$ such that $g$ is free from $\gamma g\gamma^{-1}$. Arguing similarly as in the proof of \thref{abstractcond}, we can conclude that $\mathbb{E}_{\cM}(\lambda(g))\in \bC\cdot \lambda(g)$.

Define $\Lambda=\langle g\in \Gamma:~\lambda(g^{-1})\mathbb{E}_{\cM}(\lambda(g))\neq 0,~\mbox{$g$ has infinite order}\rangle$. Clearly, $\Lambda\lhd \Gamma$ and $L(\Lambda)\subseteq \cM$. 

Suppose that $\Lambda\neq \{e\}$. In this case, we can appeal to the well-known fact that every nontrivial normal subgroup inside any hyperbolic groups with trivial amenable radical is automatically relative i.c.c. inside the ambient group, i.e., $\sharp\{h\gamma h^{-1}:~h\in \Lambda\}=\infty$ for any $e\neq \gamma\in \Gamma$. We refer the reader to the proof of \cite[Theorem 1.2]{A19} and \cite[Remark 3.8]{js2021maximal}. In particular, this tells us that $L(\Lambda)'\cap L(\Gamma)=\bC$. We can now apply \cite[Corollary 3.8]{chifan2020rigidity} to finish the proof.

Therefore, without any loss of generality, we may assume that $\Lambda$ is trivial. Equivalently, $\mathbb{E}_{\cM}(\lambda(g))=0$ for any $g\in \Gamma$ with infinite order. We are now left to show $\cM=\bC$. Since $Tor(\Gamma)=\{\gamma s\gamma^{-1}: \gamma\in \Gamma\}$, we only need to show that $\mathbb{E}_{\cM}(\lambda(s))=0$.

Now, we may write
$\mathbb{E}_{\cM}(\lambda(s))=\sum_{g\in \Gamma}a_{g sg^{-1}}\lambda(g sg^{-1})$ for some $a_{gsg^{-1}}\in \bC$. For any $\gamma\in \Gamma$, we have
\[\mathbb{E}_{\cM}(\lambda(\gamma s\gamma^{-1}))=\lambda(\gamma)\mathbb{E}_{\cM}(\lambda( s))\lambda(\gamma^{-1})=\sum_{g\in \Gamma}a_{g sg^{-1}}\lambda(\gamma g sg^{-1}\gamma^{-1}).\]
\noindent
Claim 2: $\mathbb{E}_{\cM}(\lambda(s)\cdot \mathbb{E}_{\cM}(\lambda(\gamma s\gamma^{-1})))=2a_{\gamma^{-1} s\gamma}$.

\noindent
\textit{Proof of Claim 2:}
Note that
\begin{align*}
    \mathbb{E}_{\cM}(\lambda(s)\mathbb{E}_{\cM}(\lambda(\gamma s\gamma^{-1})))
    =\sum_{g\in \Gamma}a_{gsg^{-1}}\mathbb{E}_{\cM}(\lambda(s\gamma gsg^{-1}\gamma^{-1})).
\end{align*}
We observe that $s\gamma gsg^{-1}\gamma^{-1}$ either has infinite order or is trivial. Suppose that it is not trivial. Then, it contains evenly many letter $s$ and hence, does not belong to $Tor(s)$. In particular, it has an infinite order. Moreover, observe that if $s\gamma gsg^{-1}\gamma^{-1}=e$, then $\gamma g \in C(s)=\langle s\rangle$.
In other words, $s\gamma gsg^{-1}\gamma^{-1}$ has infinite order iff $\gamma g\not\in \langle s\rangle$. It follows from our assumption that $\mathbb{E}_{\cM}(\lambda(s\gamma gsg^{-1}\gamma^{-1})=0$ whenever $\gamma g\not\in \langle s\rangle$, or equivalently, $g\not\in\{\gamma^{-1}, \gamma^{-1}s\}$. Clearly, this implies that $\mathbb{E}_{\cM}(\lambda(s)\mathbb{E}_{\cM}(\lambda(\gamma s\gamma^{-1})))=2a_{\gamma^{-1} s\gamma}$.

From Claim 2, we deduce that  $\mathbb{E}_{\cM}(\lambda(s))\lambda(\gamma)\mathbb{E}_{\cM}(\lambda(s))\lambda(\gamma^{-1})=2a_{\gamma^{-1} s\gamma}$ for all $\gamma\in \Gamma$.

In particular, by taking $\gamma=e$ and using the fact that $\mathbb{E}_{\cM}(\lambda(s))$ is self-adjoint, we deduce that $\mathbb{E}_{\cM}(\lambda(s))^2=2a_s\geq 0$.

Assume that $a_s\neq 0$. Then, $U:=\frac{\mathbb{E}_{\cM}(\lambda(s))}{\sqrt{2a_s}}$ is a self-adjoint unitary operator. Hence, $U\sqrt{2a_s}\lambda(\gamma)U\sqrt{2a_s}\lambda(\gamma^{-1})=2a_{\gamma^{-1} s\gamma}$, i.e.,
\begin{align*}
  \lambda(\gamma)U\lambda(\gamma^{-1})=\frac{a_{\gamma^{-1} s\gamma}}{a_s}U^*,~\forall~\gamma\in \Gamma.
\end{align*}
If we replace $\gamma$ by $s\gamma$, then the right hand side of the above identity does not change. Hence, we deduce that $\lambda(\gamma)U\lambda(\gamma^{-1})=\lambda(s\gamma)U\lambda(\gamma^{-1}s^{-1})$, i.e., $U$ commutes with $\lambda(\gamma^{-1}s\gamma)$ for all $\gamma\in \Gamma$. 

We now observe that $\{\gamma^{-1} s\gamma:~\gamma\in\Gamma\}$ is a relative i.c.c. set inside $\Gamma$, i.e.,
\[\sharp\{\gamma^{-1} s\gamma g\gamma^{-1} s\gamma\}=\infty,~\mbox{for all nontrivial $g\in \Gamma$.}\]  Indeed, by writing $\Gamma=F_2\rtimes_{\sigma} \frac{\bZ}{2\bZ}$, where $\sigma_s$ flips the two generators of $F_2$, it suffices to check that the following sets are infinite, i.e., for any $e\neq g'\in F_2$ and any $g''\in F_2$, \[\sharp\{\gamma^{-1}\sigma_s(\gamma)g'\sigma_s(\gamma)^{-1}\gamma:~\gamma\in F_2\}=\infty \text{ and  }\sharp\{\gamma^{-1}\sigma_s(\gamma)g''\gamma^{-1}\sigma_s(\gamma):~\gamma\in F_2\}=\infty.\] This is easy to verify by looking at the initial and ending letters of $g'$ and $g''$.

Therefore, we deduce that $U\in \bC$. Since $\tau(U)=\frac{1}{\sqrt{2a_s}}\tau(\lambda(s))=0$, we deduce that $U=0$, a contradiction to the fact that $U$ is an unitary element.

Hence, $a_s=0$. Therefore, $\mathbb{E}_{\cM}(\lambda(s))^2=2a_s=0$. Since $\mathbb{E}_{\cM}(\lambda(s))$ is a self-adjoint operator, we deduce that $\mathbb{E}_{\cM}(\lambda(s))=0$.
\end{proof}
\subsection*{Recent Developments} Question \ref{question: general case with trivial amenable radical} has recently been answered affirmatively in \cite{chifan2022invariant} using a different method. They have also showed that every i.c.c. acylindrically hyperbolic group and all i.c.c., nonamenable groups that have positive first $L^2$-Betti number and contain an infinite amenable subgroup, satisfy the ISR-property.

\section*{Acknowledgements}
The first named author's research is supported by the ISRAEL SCIENCE FOUNDATION
(Grant No. 1175/18). The second named author is supported by ``the Fundamental Research Funds for the Central Universities" (Grant No.DUT19RC(3)075).
The authors express their gratitude towards Adam Skalski for taking the time to read a near complete draft of this paper and for his numerous comments and suggestions which improved the exposition of this paper greatly. The authors also thank Adam Dor-On and Jesse Peterson for their helpful suggestions. 
The first named author is grateful near Mehrdad Kalantar and Yair Hartman for many helpful discussions regarding the problem. The first named author would also like to thank the officials (especially Kavita Ma'am) in the Israel Embassy, New Delhi. He was able to travel and start his postdoc position due to their timely help and intervention. We also thank the anonymous referee for his/her insightful comments and suggestions and for pointing out \thref{remark_Dixmier's work},  \thref{malnormal}, and \thref{necessary_condition_for_ISR}.

\end{document}